\documentclass[10pt]{article}

\usepackage{bigints}

\usepackage{amssymb}
\usepackage{mathrsfs}
\usepackage{amsmath}
\usepackage{amsfonts}

\usepackage{amsthm}
\newtheorem{theo}{Theorem}[section]

\newtheorem{lemm}{Lemma}[section]

\newtheorem{conj}{Conjecture}[section]
\newtheorem*{main}{Main Theorem}

\theoremstyle{definition}
\newtheorem{defi}{Definition}[section]

\theoremstyle{remark}

\newtheorem{rema}{Remark}[section]

\usepackage{hyperref}

\begin{document}

\title{\large{\textbf{Type of finite time singularities of the Ricci flow with bounded scalar curvature\footnotemark[1]}}}

\author{Shota Hamanaka\thanks{supported in doctoral program in Chuo University, Japan.}}

\date{}

\maketitle

\begin{abstract}
In this paper, we study the Ricci flow on a closed manifold of dimension $n \ge 4$ and finite time interval $[0,T)~(T < \infty)$
on which the scalar curvature are uniformly bounded.
We prove that if such flow of dimension $4 \le n \le 7$ has finite time singularities,
then every blow-up sequence of a locally Type I singularity has certain property.
Here, locally Type I singularity is what Buzano and Di-Matteo defined.
\end{abstract}

\section{Introduction}
\footnotetext[1]{The first version of this paper contained an elementary mistake. And we modified our Main Theorem to weaker one in this version.}
~~In this paper, we will consider extension problem for the Ricci flow on a closed (compact without boundary) manifold.
Given a $n$-manifold $M~(n \ge 2),$ a family of smooth Riemannian metrics $g(t)$ on $M$ is called a Ricci flow
(introduced by Hamilton in \cite{hamilton1982three}) 
on the time interval $[0,T) \subset \mathbb{R}$ 
if it satisfies 
\[
\frac{\partial g(t)}{\partial t} = - 2\,\mathrm{Ric}_{g(t)},~~\mathrm{for~all}~t \in [0,T),
\]
where $\mathrm{Ric}_{g(t)} = (R_{ij})$ denotes the Ricci curvature tensor of $g(t).$
In \cite{hamilton2formation}, Hamilton proved that
a Ricci flow on a closed manifold develops a singularity at a finite time $T$
(i.e., $T$ is the maximal existence time of the flow)
if and only if
the maximum of the norm $|\mathrm{Rm}| = \left( R_{ijkl} \cdot R^{ijkl} \right)^{1/2}$ of the Riemannian curvature tensor $\mathrm{Rm} = \mathrm{Rm}_{g(t)} = (R_{ijkl})$ blows up at $T.$
{\v{S}}e{\v{s}}um \cite{vsevsum2005curvature} proved that indeed a bound on the Ricci curvature rather than the full Riemannian curvature tensor sufficies to extend the closed Ricci flow.
Some different conditions with which the Ricci flow can be extended over $T$ are known (see \cite{matteo2020mixed},  \cite{enders2011type}, \cite{he2014remarks}, \cite{wang2008conditions}, \cite{wang2012conditions}).
It has been conjectured that a bound on the scalar curvature could potentially also be sufficient to extend the flow.
In dimension three, this is a consequence of the Hamilton-Ivey pinching estimate(see \cite{topping2006lectures})
while in higher dimensions it is known to be true for (globally) Type I Ricci flows by Ender, M{\"u}ller and Topping \cite{enders2011type} as well as in the K{\"a}hler case by Zhang \cite{zhang2010scalar}. 
In recent years, this conjecture has been the focus of many many interesting new developments, see for example 
\cite{bamler2018convergence}, \cite{bamler2017heat}, \cite{bamler2019heat}, \cite{buzano2020local}, \cite{chen2013conditions}, \cite{chen2012space}, \cite{chen2017space}, \cite{enders2011type}, \cite{simon2015extending}, \cite{simon2015some}, \cite{wang2012conditions}, \cite{zhang2010scalar}.

Motivated by Hamilton's theory \cite{hamilton2formation},
Buzano and Di-Matteo introduced a local analysis of singularities and curvature blow-up rates \cite{buzano2020local}.
Moreover, in the same paper, they also proved that the closed Ricci flow with uniformly bounded scalar curvature
has no well-behaved blow-up sequences in dimension $< 8.$

Our main result is the following.
\begin{main}
\label{maintheo}
Let $(M^{n}, g(t))_{t \in [0,T)}$ be a Ricci flow on a closed manifold $M$ of dimension $n < 8$
and $T < +\infty.$
Assume that 
\[
\sup_{M \times [0,T)} |R_{g(t)}| \le R_{0} < +\infty
\]
for some $R_{0} \in \mathbb{R}_{\ge 0},$
where $R_{g(t)}$ denotes the scalar curvature of $g(t).$
And assume that the time $T$ at which $(M, g(t))_{t \in [0,T)}$ forms singularities
and $p$ is a locally Type I singularity in the sense of Definition \ref{defi1}(3) below.
Then for any $\epsilon_{1} \in \left( 0,\frac{1}{\sqrt{n} (n-1)} \right), \epsilon_{2} \in (0,1)$ there is no blow-up sequence $(p_{i}, t_{i}) \in M \times [0,T)$
which tends to $p$ and satisfies the following :
there exists $i_{0} \in \mathbb{N}$ such that for all $i \ge i_{0}$
there exists a point 
\[
\begin{split}
(q,t) &\in B \left( p_{i}, t_{i}, \frac{\epsilon_{2}}{\sqrt{2 C}} r_{\mathrm{Ric}}(p_{i}, t_{i}) \right) \\
&\times \left( \max \left\{ 0, t_{i} - \frac{\epsilon_{2} (\sqrt{2 C})^{-1}}{1 + C_{1}} r^{2}_{\mathrm{Ric}}(p_{i}, t_{i}) \right\},
\min \left\{ T,  t_{i} + \frac{\epsilon_{2} (\sqrt{2 C})^{-1}}{1 + C_{1}} r^{2}_{\mathrm{Ric}}(p_{i}, t_{i}) \right\} \right)
\end{split}
\]
such that
\[
|\mathrm{Ric}| (q, t_{i}) \ge \epsilon_{1} r_{\mathrm{Ric}}^{-2}(p_{i},t_{i}),
\]
where $C_{1} = 2 ^{4}\sqrt{2/3}\sqrt{n-1}$ and $C$ denotes the upper bound of locally Riemann-Type I point $p$
as in Lemma \ref{sing2} (2).
\end{main}

This paper is organized as follows.
In Section 2, we summarize some fundamental results on singularity models of Ricci flows and
Ricci flow with uniformly bounded scalar curvature,
which give a background of Main Theorem. 
In Section 3, we define basic definitions and prepare some lemmas to prove our Main Theorem.
In the final Section, we prove Main Theorem.

\section{Backgrounds}
For a Ricci flow $(M, g(t)),$ we will denote by 
\[
K_{\mathrm{max}}(t) := \sup_{x \in M} |\mathrm{Rm} (x,t)|_{g(t)}.
\]
According Hamilton\cite{hamilton2formation}, one can classify finite time maximal solutions into two types below :
Let $(M, g(t))_{t \in [0,T)}~(0 < T < +\infty)$ be a finite time maximal defined solution of the Ricci flow equation.
Then, $(M, g(t))_{t \in [0,T)}$ is clearly of one and only one of the following two types.

\textbf{Type I} :~~$\sup_{t \in [0,T)} (T-t) K_{\mathrm{max} (t)} < +\infty,$

\textbf{Type II} :~~$\sup_{t \in [0,T)} (T-t) K_{\mathrm{max} (t)} = +\infty.$

For each type of maximal solution, Hamilton defines a corresponding type of limiting singularity model.
By Perelman's local non-collapsing theorem \cite{perelman2002entropy}, every Ricci flow which meet a finite time singularity has a blow-up limit which is a singularity model of the corresponding type.
A conjecture, normally attributed to Hamilton \cite{hamilton2formation}, is that a suitable blow-up sequence for a Type I singularity convergences to a nontrivial gradient shrinking soliton.
In the case where the blow-up limit is compact, this conjecture was confirmed by {\v{S}}e{\v{s}}um \cite{sesum2006convergence}.
In the general case, blow-up to a gradient shrinking soliton was proved by Naber \cite{naber2010noncompact}.
Finally, Enders-M{\"u}ller-Topping {\cite[Theorem~1.1]{enders2011type}} shown that the blow-up limits are nontrivial.
More precisely, they proved the following.
\begin{theo}[{\cite[Theorem~1.1]{enders2011type}}]
\label{EMT}
Let $(M^{n}, g(t))_{t \in [0,T)}~(T < +\infty)$ be a Type I Ricci flow and suppose $p$ is a Type I singular point(see the definition below). Then for \textrm{every} sequence $\lambda_{j} \rightarrow \infty,$
the rescaled Ricci flows $(M, g_{j}(t), p)$ defined on $[-\lambda_{j} T, 0)$ by $g_{j}(t) := \lambda_{j} g(T + \frac{t}{\lambda_{j}})$ subconverge to a normalized nontrivial gradient shrinking soliton in canonical form. 
\end{theo}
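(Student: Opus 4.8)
The plan is to follow the standard three-step strategy for identifying Type I blow-up limits, with the new work concentrated in the last step. First, I would use the Type I hypothesis together with Perelman's no-local-collapsing theorem and Hamilton's compactness theorem to extract a limit Ricci flow on $(-\infty,0)$. Second, I would identify this limit as a gradient shrinking soliton in canonical form via Perelman's reduced volume and its rigidity, set up at the singular time following Naber. Third --- the genuinely new point --- I would exclude the flat Gaussian soliton by establishing a strict reduced-volume gap at a Type I singular point. The hard part will be this third step, together with the technical preliminaries needed to run Perelman's $\mathcal L$-geometry up to the singular time $T$ for Type I flows.

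For the first step, note that the Type I bound $\sup_{t\in[0,T)}(T-t)K_{\mathrm{max}}(t)\le C_0$ is scale invariant and rescales to $|\mathrm{Rm}_{g_j}|(x,s)\le C_0/|s|$ on $[-\lambda_j T,0)$ for $g_j(s):=\lambda_j g(T+s/\lambda_j)$, so the $g_j$ have uniform curvature bounds on every compact subinterval of $(-\infty,0)$, and Shi's estimates then bound all derivatives. Since $\kappa$-noncollapsing is scale invariant, Perelman's no-local-collapsing theorem gives a uniform $\kappa$ for all the $g_j$, hence uniform injectivity-radius lower bounds at the basepoints $(p,s)$. Hamilton's Cheeger--Gromov--Hamilton compactness theorem then produces a subsequence converging smoothly to a complete pointed ancient Ricci flow $(M_\infty,g_\infty(s),p_\infty)_{s\in(-\infty,0)}$ with $|\mathrm{Rm}_{g_\infty}|(\cdot,s)\le C_0/|s|$.

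For the second step, following Naber I would set up Perelman's reduced length $\ell$ and reduced volume $\widetilde V$ based at the singular time --- i.e.\ based at points $(p,t_k)$ with $t_k\nearrow T$, or at $(p,T)$ via the limiting procedure --- using the Type I bounds to obtain the a priori estimates on $\ell$, $\tau|\nabla\ell|^2$ and $\tau|\partial_\tau\ell|$ that make $\ell$ and $\widetilde V$ behave well under rescaling and under Cheeger--Gromov--Hamilton limits. Monotonicity of $\widetilde V_{(p,T),g}$ in $\tau=T-t$ together with the scale invariance $\widetilde V_{(p,0),g_j}(\sigma)=\widetilde V_{(p,T),g}(\sigma/\lambda_j)$ gives, for each fixed $\sigma>0$,
\[
\widetilde V_{(p,0),g_j}(\sigma)\ \longrightarrow\ V_0:=\lim_{\tau\to0^+}\widetilde V_{(p,T),g}(\tau)=\sup_{\tau\in(0,T)}\widetilde V_{(p,T),g}(\tau)\in(0,1].
\]
Passing $\widetilde V$ through the convergence shows that the limit flow has reduced volume constantly equal to $V_0$, and Perelman's rigidity for the equality case of the reduced-volume monotonicity then forces $(M_\infty,g_\infty(s))$ to be a gradient shrinking soliton in canonical form: the potential function $f_\infty$ arises as the limit of the rescaled reduced lengths, and the normalization is inherited from $\widetilde V$.

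For the third step it remains to show $V_0<1$, which finishes the proof, since the flat Gaussian soliton on $\mathbb R^n$ is the only normalized gradient shrinking soliton with $\widetilde V\equiv1$, so $V_0<1$ rules it out and yields nontriviality. I would argue by contraposition: if $V_0=1$ then $\widetilde V_{(p,T),g}(\tau)\to1$ as $\tau\to0^+$, and the near-equality case of the reduced-volume monotonicity forces the flow, at small scales and at times close to $T$, to be arbitrarily close to Euclidean space near $p$ in a scale-invariant sense; feeding this into Perelman's pseudolocality theorem at a sequence of times $t_k\nearrow T$ propagates a uniform bound on $|\mathrm{Rm}|$ on a fixed neighbourhood of $p$ all the way up to $T$, so $p$ is a regular point --- contradicting that $p$ is a Type I singular point, since at a regular point curvature stays bounded near $(p,T)$ and no sequence $(p_i,t_i)\to(p,T)$ can satisfy $|\mathrm{Rm}|(p_i,t_i)\ge c/(T-t_i)$. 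The two genuinely delicate ingredients are therefore the Naber-type $\mathcal L$-geometry estimates that allow one to base $\widetilde V$ at the singular time, and the quantitative passage from reduced volume near $1$ to almost-Euclidean geometry that triggers pseudolocality; the compactness in the first step and the rigidity in the second are comparatively routine.
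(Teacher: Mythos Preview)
This theorem is not proved in the present paper at all: it is quoted verbatim as a background result from Enders--M\"uller--Topping \cite{enders2011type} in Section~2, with no argument given. There is therefore no ``paper's own proof'' to compare your proposal against.

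That said, your three-step outline is essentially the strategy of the original \cite{enders2011type}: Type~I bounds plus Perelman's no-local-collapsing feed into Hamilton's compactness to produce an ancient limit; Naber's reduced-length machinery based at the singular time, together with Perelman's reduced-volume monotonicity and its rigidity, identifies the limit as a normalized gradient shrinking soliton; and the Gaussian is excluded. The one place where your sketch diverges from \cite{enders2011type} is the nontriviality step. You propose to show $V_0<1$ by a near-equality/pseudolocality argument. Enders--M\"uller--Topping instead proceed more directly: they first establish (their Theorem~3.2, also invoked in Remark~3.1(a) of the present paper) that at any singular point $p$ of a Type~I flow one actually has the pointwise lower bound $|\mathrm{Rm}|(p,t)\ge c/(T-t)$ along a sequence $t\nearrow T$, and this lower bound is scale-invariant, so it survives to give a point of nonzero curvature in the limit, ruling out flatness. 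Your pseudolocality route is plausible but considerably more elaborate than what is needed, and makes the ``hard part'' harder than it is in the source.
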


Here, Type I singular points are defined as follows.
\begin{defi}[{\cite[Definition~1.2]{enders2011type}}]
\label{EMT-type1}
A space-time sequence $(p_{i}, t_{i})$ with $p_{i} \in M$ and $t_{i} \nearrow T$ in a Ricci flow is called an \textit{essential blow-up sequence} if there exists a constant $c > 0$ such that
\[
|\mathrm{Rm}_{g(t_{i})}|_{g(t_{i})} (p_{i}) \ge \frac{c}{T - t_{i}}.
\]
A point $p \in M$ in a Type I Ricci flow is called a (general) \textit{Type I singular point} if
there exists an essential blow-up sequence with $p_{i} \rightarrow p$ on $M.$
We denote the set of all Type I singular points by $\Sigma_{G, I}.$
\end{defi}

By the work of many researchers including Hamilton and Perelman, the classification of singularity models of 3-dimensional Ricci flows are done.
On the other hand, genegal flows were relatively poorly understood.
As seen in the result of Enders-M{\"u}ller-Topping mentioned above, one important class of reasonable singularity models in higher dimensions are gradient shrinking solitons.
In dimension $n \le 3,$ all non-trivial gradient shrinking solitons are quotients of round spheres or cylinders. 
However, more complicated gradient shrinking soliton exist in dimension $\ge 4.$
In \cite{appleton2019eguchi}, Appleton constructs a class of 4-dimensional non-compact Ricci flows that develop a (globally) Type II singularity in finite time and obtains the following classification of all non-trivial blow-up singularity models 
(i.e., with blow-up factors $\lambda_{i}$ such that $\lambda_{i} \rightarrow \infty$ are arbitrary):

\noindent
(1)~The Eguchi-Hanson metric which is Ricci flat and asymptotic to the flat cone  $\mathbb{R}^{4}/\mathbb{Z}_{2}.$

\noindent
(2)~The flat cone $\mathbb{R}^{4}/\mathbb{Z}_{2}$ which has an isolated orbifold singularity at the origin.

\noindent
(3)~The quotient $M_{Bry}/\mathbb{Z}_{2}$ of the Bryant soliton which also has an isolated orbifold singularity at its tip.

\noindent
(4)~The cylinder $\mathbb{R}P^{3} \times \mathbb{R}.$
 
Appleton's examples also show that a Ricci flow on a 4-dimensional non-compact manifold may collapse an embedded two-dimensional sphere with self-intersection $k \in \mathbb{Z}$ to a point in finite time and thereby produce a singularity.
In addition, there are further examples in higher dimensions \cite{stolarski2019curvature} whose only blow-up models that are gradient shrinking solitons must be singular and possibly degenerate (since a Ricci flat cone may be considered as a gradient shrinking Ricci soliton (see {\cite[Theorem~1.1,~p.4]{stolarski2019curvature}} and {\cite[Proposition~4.3.1]{petersen2006riemannian}})).
Moreover, the scalar curvatures of Stolarski's examples in \cite{stolarski2019curvature} satisfy a Type I bound, i.e.,
\[
\limsup_{t \nearrow T}~(T-t)  \sup_{x \in S^{p} \times S^{q+1}} |R|(x,t) < +\infty,
\]
where $p \ge 2, q \ge 10.$
This consistent with the convergence results for Ricci flows with uniform scalar curvature bounds proved in {\cite[Theorem~1.2]{bamler2018convergence}} as follows (c.f. the result of Enders-M{\"u}ller-Topping (Theorem \ref{EMT} mentioned above)).
\begin{theo}[{\cite[Theorem~1.2]{bamler2018convergence}}]
Let $(M^{n}, g(t))_{t \in [0,T)}~(T < +\infty,~n \ge 4)$ be a closed Ricci flow on the maximal finite time interval $[0,T).$
Assume that for some constant $C < +\infty,$ we have 
\[
\sup_{x \in M}~R(x, t) \le C (T-t)^{-1}~~~\mathrm{for~all}~~t \in [0,T).
\]
Then for any point $q \in M$ and any sequence of times $t_{i} \nearrow T,$
we can choose a subsequence such that $(M, (T-t_{i})^{-1} g_{t_{i}}, q)$ converges to a pointed, singular space
(see {{\cite[Definition~2.1]{bamler2018convergence}}} for the definition) $(\mathcal{X}, q_{\infty}) = (X, d, \mathcal{R}, g, q_{\infty})$ that has singularities of codimension 4 in the sense of {\cite[Definition~2.2]{bamler2018convergence}}
and that is $Y$-regular at scale 1 in the sense of {\cite[Definition~2.4]{bamler2018convergence}} for some $Y < +\infty$
that only depends on $g(0)$ and $C.$
Moreover, $\mathcal{X}$ is a gradient shrinking soliton in the following sense :
There is a smooth and bounded function $f_{\infty} \in C^{\infty}(\mathcal{R})$ that satisfies the shrinking soliton equation
\[
\mathrm{Ric}_{g} + \nabla^{2} f_{\infty} = \frac{1}{2} g~~\mathrm{on}~~\mathcal{R}.
\]
\end{theo}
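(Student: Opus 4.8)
The plan is to argue by contradiction via a rescaling and local smooth-compactness argument, with the dimensional restriction $n<8$ entering only through Lemma~\ref{sing2}(2), which upgrades the locally Type~I hypothesis at $p$ to a genuine Type~I bound on the \emph{full} curvature tensor near the blow-up sequence. Suppose, contrary to the assertion, that for the given $\epsilon_1\in\left(0,\frac{1}{\sqrt n\,(n-1)}\right)$ and $\epsilon_2\in(0,1)$ there is a blow-up sequence $(p_i,t_i)\to p$ which, for every $i\ge i_0$, admits a witnessing point whose spatial component $q_i$ lies in $B\!\left(p_i,t_i,\frac{\epsilon_2}{\sqrt{2C}}r_{\mathrm{Ric}}(p_i,t_i)\right)$ and satisfies $|\mathrm{Ric}|(q_i,t_i)\ge\epsilon_1\,r_{\mathrm{Ric}}^{-2}(p_i,t_i)$. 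Set $\lambda_i:=r_{\mathrm{Ric}}^{-2}(p_i,t_i)$; since $(p_i,t_i)$ is a blow-up sequence converging to the singular point $p$, we have $\lambda_i\to\infty$ (were $r_{\mathrm{Ric}}$ bounded below along the sequence, $|\mathrm{Ric}|$ would stay bounded near $p$ and the flow would extend past $T$ by {\v{S}}e{\v{s}}um's theorem, contradicting that $T$ is singular). Consider the parabolically rescaled Ricci flows $\tilde g_i(s):=\lambda_i\,g\!\left(t_i+\lambda_i^{-1}s\right)$, pointed at $(p_i,0)$. By the definition of $r_{\mathrm{Ric}}$ we have $|\mathrm{Ric}_{\tilde g_i}|\le 1$ on the unit Ricci-scale ball about $(p_i,0)$, and, since $p$ is a locally Type~I singular point, Lemma~\ref{sing2}(2) applies and provides the constant $C$ together with a uniform bound $|\mathrm{Rm}_{\tilde g_i}|\le C$ on a fixed parabolic neighbourhood of $(p_i,0)$ which strictly contains
\[
\bar B_{\tilde g_i(0)}\!\left(p_i,\tfrac{\epsilon_2}{\sqrt{2C}}\right)\times\left[-\tfrac{\epsilon_2(\sqrt{2C})^{-1}}{1+C_1},\,0\right],
\]
the rescaling of the neighbourhood in the statement; this is exactly what fixes the constants $\epsilon_2\in(0,1)$ and $C_1=2^4\sqrt{2/3}\sqrt{n-1}$, with the slack from $\epsilon_2<1$ separating the two regions. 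Finally $|R_{\tilde g_i}|\le\lambda_i^{-1}R_0\to 0$ uniformly on $M$.

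Next I would extract a limit. Perelman's no-local-collapsing theorem, applied to $g(t)$ on the finite interval $[0,T]$, gives $\kappa=\kappa(g(0),T)>0$ with $g(t)$ being $\kappa$-non-collapsed on scales below $\sqrt T$; after rescaling, each $\tilde g_i$ is $\kappa$-non-collapsed at the scale $C^{-1/2}$ about $(p_i,0)$, uniformly in $i$. Together with the uniform bound on $|\mathrm{Rm}_{\tilde g_i}|$ and Shi's interior estimates on the slightly larger parabolic region from Lemma~\ref{sing2}(2), the local version of Hamilton's compactness theorem yields, after passing to a subsequence, $C^\infty_{\mathrm{loc}}$-convergence of $(M,\tilde g_i(s),p_i)$ to a smooth pointed Ricci flow $(M_\infty,g_\infty(s),p_\infty)$ defined on a parabolic neighbourhood of $(p_\infty,0)$ that contains $\bar B_{g_\infty(0)}(p_\infty,\tfrac{\epsilon_2}{\sqrt{2C}})\times(-\delta,0]$ for some $\delta>0$. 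On this neighbourhood $R_{g_\infty}\equiv 0$, since $|R_{\tilde g_i}|\to 0$; feeding this into the evolution identity $\partial_s R_{g_\infty}=\Delta_{g_\infty}R_{g_\infty}+2|\mathrm{Ric}_{g_\infty}|^2$ forces $\mathrm{Ric}_{g_\infty}\equiv 0$ there, so $g_\infty$ is a static Ricci-flat flow.

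It remains to pass the Ricci lower bound to the limit. For $i\ge i_0$ the point $q_i$ lies in $\bar B_{\tilde g_i(0)}(p_i,\tfrac{\epsilon_2}{\sqrt{2C}})$, which is precompact under the convergence (the admissible range $\epsilon_1<\frac{1}{\sqrt n\,(n-1)}$ is used here to keep $q_i$ safely interior to the region controlled by Lemma~\ref{sing2}(2)), so along a further subsequence $q_i\to q_\infty\in\bar B_{g_\infty(0)}(p_\infty,\tfrac{\epsilon_2}{\sqrt{2C}})$, and $C^\infty_{\mathrm{loc}}$-convergence gives
\[
|\mathrm{Ric}_{g_\infty}|(q_\infty,0)=\lim_{i\to\infty}|\mathrm{Ric}_{\tilde g_i}|(q_i,0)=\lim_{i\to\infty}\lambda_i^{-1}\,|\mathrm{Ric}_{g(t_i)}|(q_i)\ge\epsilon_1>0,
\]
contradicting $\mathrm{Ric}_{g_\infty}\equiv 0$. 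I expect the only genuine difficulty to lie inside Lemma~\ref{sing2}(2) — that the locally Type~I assumption together with $n<8$ really yields a Riemann-Type~I curvature bound on a parabolic region of precisely the stated size (an $\epsilon$-regularity-type statement relying on the structure of Ricci-flow singularity models in dimension $n<8$); granting that lemma, the argument above is a standard blow-up/compactness contradiction.
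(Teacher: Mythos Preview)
Your proposal does not address the stated theorem at all. The statement in question is Bamler's structure theorem: under a Type~I scalar curvature bound, parabolic rescalings $(M,(T-t_i)^{-1}g_{t_i},q)$ subconverge to a pointed singular space $(\mathcal{X},q_\infty)$ with singularities of codimension~4, $Y$-regular at scale~1, carrying a gradient shrinking soliton structure on its regular part. In the present paper this theorem is not proved; it is quoted from \cite{bamler2018convergence} as background in Section~2, so there is no ``paper's own proof'' to compare against.

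What you have written is instead an attempted proof of the paper's \emph{Main Theorem} (equivalently Theorem~\ref{theo1}): the non-existence of blow-up sequences $(p_i,t_i)\to p$ with a Ricci lower bound $|\mathrm{Ric}|(q,t_i)\ge\epsilon_1 r_{\mathrm{Ric}}^{-2}(p_i,t_i)$ near a locally Type~I point in dimension $n<8$. Every ingredient you invoke --- the constants $\epsilon_1,\epsilon_2,C_1$, the locally Type~I hypothesis at $p$, the dimensional restriction $n<8$, the contradiction via a Ricci-flat limit --- belongs to that theorem, not to Bamler's. Bamler's result makes no assumption that $n<8$, no locally Type~I hypothesis, and its conclusion is a \emph{convergence} statement to a possibly singular soliton, not a \emph{non-existence} statement about blow-up sequences. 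Your argument produces nothing of the sort: you neither construct the singular space $\mathcal{X}$, nor verify codimension-4 singularities or $Y$-regularity, nor exhibit the potential function $f_\infty$.

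If your intention was actually to prove the Main Theorem, note that the paper's own argument is rather different from yours: it does not pass to a smooth limit at all. Instead it invokes Theorem~\ref{buzano-matteo} (Buzano--Di-Matteo) to locate, for each large $i$, a point $(x,t_i)$ in a small ball around $p_i$ where the well-behavedness inequality fails, and then uses the Bamler--Zhang derivative estimates (Lemma~\ref{prop7}) together with the Harnack-type inequalities for $r_{\mathrm{Rm}}$ and $r_{\mathrm{Ric}}$ (Lemma~\ref{prop2}) to propagate the Ricci bound from $(q,t)$ to $(x,t_i)$ and derive a direct numerical contradiction. The dimensional restriction $n<8$ enters through Theorem~\ref{buzano-matteo}, not through Lemma~\ref{sing2}(2) as you suggest; Lemma~\ref{sing2}(2) is a characterisation of $\Sigma_I$ valid in all dimensions and contains no $\epsilon$-regularity content.
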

Motivated by these results, Bamler conjectured the following.
\begin{conj}[{\cite[Conjecture~5.2]{bamler2021recent}}]
For any Ricci flow ``most'' singularity models are gradient shrinking solitons that may be degenerate and may have a singular set of codimension $\ge 4.$
\end{conj}
In dimension three, the Hamilton-Ivey pinching estimate says that 
the lowest sectional curvature is bounded by the scalar curvature
and is also important to analyze finite-time singularities.
On the other hand, in higher dimensions, there is no longer a corresponding estimate.
But the behavior of the scalar curvature under the Ricci flow is still important.
At the end of this section, we will describe the position of our main result of this paper in comparison with the result in \cite{enders2011type}.
In \cite{enders2011type}, Enders-M{\"u}ller-Topping defined some notions of singularity.
\begin{defi}[{\cite[Definition~1.3,~1.4]{enders2011type}}]
(1)~A point $p \in M$ in a Type I Ricci flow is called a \textit{special Type I singular point} if there exists an essential blow-up sequence $(p_{i}, t_{i})$ with $p_{i} = p$ for all $i \in \mathbb{N}.$
The set of all such points is denoted by $\Sigma_{s}.$
Moreover, we denote by $\Sigma_{\mathrm{Rm}} \subset \Sigma_{s}$ the set of points $p \in M$ for which $|\mathrm{Rm}_{g(t)}|_{g(t)}(p)$ blows up at the Type I rate as $t \rightarrow T.$

\noindent
(2)~The set $\Sigma_{R}$ is defined to be the set of points $p \in M$ for which the scalar curvature $R_{g(t)}(p)$ blows up at the Type I rate as $t \rightarrow T.$
\end{defi}

From the definitions, it is clear that a priori
\[
\Sigma_{R} \subset \Sigma_{\mathrm{Rm}} \subset \Sigma_{s} \subset \Sigma_{G, I} \subset \Sigma,
\]
where $\Sigma_{G, I}, \Sigma$ denote respectively the set in Definition \ref{EMT-type1} and Definition \ref{defi1}(1) below.
Thus, Enders-M{\"u}ller-Topping proved the following.
\begin{theo}[{\cite[Theorem~1.2]{enders2011type}}]
Let $(M^{n}, g(t))_{t \in [0,T)}~(T < +\infty)$ be a Type I Ricci flow.
Then $\Sigma = \Sigma_{R}.$
Consequently, we have 
\[
\Sigma_{R} = \Sigma_{\mathrm{Rm}} = \Sigma_{s} = \Sigma_{G, I} = \Sigma.
\]
\end{theo}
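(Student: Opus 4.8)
Since the a priori chain $\Sigma_R \subseteq \Sigma_{\mathrm{Rm}} \subseteq \Sigma_s \subseteq \Sigma_{G,I} \subseteq \Sigma$ is already established, it suffices to prove the one reverse inclusion $\Sigma \subseteq \Sigma_R$; this simultaneously collapses all five sets. The plan is to run the following chain at a fixed singular point $p \in \Sigma$: first produce an essential blow-up sequence converging to $p$, so that $p \in \Sigma_{G,I}$; then feed this into Theorem \ref{EMT} to obtain a nontrivial shrinking-soliton blow-up limit based at $p$; then use positivity of the scalar curvature on that limit; and finally unwind the parabolic rescaling to read off Type I blow-up of $R_{g(t)}(p)$.

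Step 1 (from singular point to Type I singular point). One shows that for a Type I flow, $p \in \Sigma$ if and only if $\limsup_{(x,t)\to(p,T)}(T-t)\,|\mathrm{Rm}_{g(t)}|(x) > 0$. The ``if'' direction is immediate from the definition of $\Sigma$. For ``only if'': assume the limsup vanishes; then at a time $t_0$ close to $T$ the ball $B_{g(t_0)}(p,\rho)$, with $\rho$ comparable to $\sqrt{T-t_0}$, carries curvature $|\mathrm{Rm}| \ll (T-t_0)^{-1}$, so after parabolic rescaling by $(T-t_0)^{-1}$ this ball is nearly flat; Perelman's no-local-collapsing theorem supplies a lower volume bound, whence (via the Cheeger--Gromov--Taylor injectivity-radius estimate) the rescaled ball has almost-Euclidean volume ratio, and Perelman's pseudolocality theorem then forces a uniform bound on $|\mathrm{Rm}|$ on a fixed spacetime neighborhood of $p$ up to time $T$ --- contradicting $p \in \Sigma$. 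Granting the equivalence, choose $(p_i,t_i)\to(p,T)$ realizing the positive limsup; this is an essential blow-up sequence in the sense of Definition \ref{EMT-type1}, so $p$ is a (general) Type I singular point, $p \in \Sigma_{G,I}$.

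Step 2 (soliton blow-up, positivity of $R$, conclusion). Set $\lambda_j := (T-t_j)^{-1} \to \infty$ and $g_j(t) := \lambda_j g(T + t/\lambda_j)$. By Theorem \ref{EMT} a subsequence of the pointed flows $(M, g_j(t), p)$ converges, smoothly on compact subsets of $(-\infty,0)$, to a normalized nontrivial gradient shrinking soliton $(M_\infty, g_\infty(t), x_\infty)$. On a complete nonflat gradient shrinking Ricci soliton the scalar curvature is strictly positive everywhere: $R_{g_\infty} \ge 0$ (ancient solutions have nonnegative scalar curvature, by B.-L.~Chen), and the standard soliton identity $\Delta_{f_\infty} R_{g_\infty} = R_{g_\infty} - 2|\mathrm{Ric}_{g_\infty}|^2$ exhibits $R_{g_\infty}$ as a nonnegative supersolution of $\Delta_{f_\infty} - 1$, so the strong maximum principle leaves only $R_{g_\infty} > 0$ or $R_{g_\infty}\equiv 0$; the latter forces $\mathrm{Ric}_{g_\infty}\equiv 0$ and then flatness via the soliton equation, contradicting nontriviality. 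Hence $c_0 := R_{g_\infty(-1)}(x_\infty) > 0$. Under the rescaling $g_j(-1) = \lambda_j\,g(t_j)$ the scalar curvature transforms by $R_{g_j(-1)}(p) = (T-t_j)\,R_{g(t_j)}(p)$, and smooth convergence of the flows at the marked point gives $(T-t_j)\,R_{g(t_j)}(p) \to R_{g_\infty(-1)}(x_\infty) = c_0 > 0$. Therefore $\limsup_{t\nearrow T}(T-t)\,R_{g(t)}(p) \ge c_0 > 0$, i.e.\ $R_{g(t)}(p)$ blows up at the Type I rate, so $p \in \Sigma_R$. This proves $\Sigma \subseteq \Sigma_R$, hence the asserted equality of all five sets.

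The step I expect to be the real obstacle is the ``only if'' half of Step 1 --- upgrading mere unboundedness of $|\mathrm{Rm}|$ near $p$ to unboundedness at the precise Type I rate $(T-t)^{-1}$ --- because a priori the curvature could diverge strictly more slowly; this is exactly where one must combine the global Type I bound with Perelman's non-collapsing and pseudolocality (and a lower injectivity-radius bound). By contrast, Step 2's appeals to Theorem \ref{EMT} and to the known positivity of $R$ on nonflat shrinking solitons, together with the scaling bookkeeping, are routine.
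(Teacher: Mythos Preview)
The paper does not give its own proof of this statement; it appears in Section~2 (``Backgrounds'') purely as a result quoted from \cite{enders2011type}, with no argument supplied. There is therefore nothing in the present paper to compare your proposal against.

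For what it is worth, your outline is correct and is essentially the proof given by Enders--M\"uller--Topping themselves: the reduction to the single inclusion $\Sigma \subseteq \Sigma_R$ via the a~priori chain, the pseudolocality argument upgrading $p \in \Sigma$ to $p \in \Sigma_{G,I}$ (this is their Theorem~3.2, cited in the present paper's Remark following Definition~\ref{defi1}), the invocation of Theorem~\ref{EMT} to produce a nontrivial shrinker blow-up based at $p$, the strict positivity of scalar curvature on nonflat complete gradient shrinkers (via Chen's $R \ge 0$ on ancient flows together with the drifted identity $\Delta_{f_\infty} R = R - 2|\mathrm{Ric}|^2$ and the strong minimum principle), and the rescaling bookkeeping to read off Type~I blow-up of $R_{g(t)}(p)$. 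Your identification of Step~1 as the substantive step is also accurate.
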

Hence, from Hamliton's compactness theorem, if $(M^{n}, g(t))_{t \in [0,T)}~(T < +\infty,~n \ge 4)$ is a Type I Ricci flow with uniformly bounded scalar curvature (in space and time), then the flow can be extended over $T.$
In other words, $\mathbf{(*)}$ \textit{if} $(M^{n}, g(t))_{t \in [0,T)}~(T < +\infty,~n \ge 4)$ \textit{with uniformly bounded scalar curvature has a finite time singularity at time} $T,$ \textit{then such flow must be Type II}.

From the perspective of globally Type I or Type II, we can only analyze singularities of such Types.
To analyze pointwisely singularities of Ricci flows,
Buzano and Di-Matteo \cite{buzano2020local} introduced general definitions of locally Type I and locally Type II singular points (see Definition \ref{defi1} below).
Our main result in this paper is implicitly directed towards proving the same assertion in $\mathbf{(*)}$ in the local sense in dimension $< 8$ (exact claim is Main Theorem mentioned above).

\section{Definitions and some known facts}
\begin{defi}[{\cite[Definition~1.1]{buzano2020local}}]
\label{defi1}
Let $(M^{n}, g(t))_{t \in [0,T)}$ be a Ricci flow maximally defined on the interval $[0,T),~T < +\infty$
and assume that $(M, g(t))$ has bounded (resp. Ricci) curvatures for $t \in [0,T).$
For any $t \in [0,T),$ we consider the rescaled Ricci flow $\tilde{g}_{t}(s) := (T-t)^{-1} g (t + (T-t)s)$
defined for $s \in [-\frac{1}{T-t}, 1).$

\noindent
(1)~We say that a point $p \in M$ is a \textit{singular~point} (resp. \textit{Ricci singular point})
if for any neighbourhood $U$ of $p,$ the Riemannian curvature (resp. Ricci curvature) becomes unbounded on $U$ as $t$ approaches $T.$
The \textit{singular~set} $\Sigma$ (resp. \textit{Ricci singular set} $\Sigma^{\mathrm{Ric}}$) is the set of all such points and the \textit{regular~set} $\mathcal{R}$ (resp. \textit{Ricci regular set} $\mathcal{R}^{\mathrm{Ric}}$) consists of the complement of $\Sigma$ (resp. $\Sigma^{\mathrm{Ric}}$) in $M,$ i.e., $\mathcal{R} := M \setminus \Sigma$
(resp. $\mathcal{R}^{\mathrm{Ric}} := M \setminus \Sigma^{\mathrm{Ric}}$).

\noindent
(2)~We say that a point $p \in M$ is a \textit{locally Type~I~singular~point} (resp. \textit{locally Type I Ricci singular point}) if there exist constants 
$c_{I},~C_{I},~r_{I} > 0$ such that we have
\[
0 < c_{I} < \limsup_{t \nearrow T} \sup_{B_{\tilde{g}_{t}(0)}(p,r_{I}) \times (-r_{I}^{2}, r_{I}^{2})}  |\mathrm{Rm}_{\tilde{g}_{t}}|_{\tilde{g}_{t}} \le C_{I} < +\infty.
\]  
\[
\left( \mathrm{resp.}~~0 < a_{0} c_{I} < \limsup_{t \nearrow T} \sup_{B_{\tilde{g}_{t}(0)}(p,r_{I}) \times (-r_{I}^{2}, r_{I}^{2})}  |\mathrm{Ric}_{\tilde{g}_{t}}|_{\tilde{g}_{t}} \le a_{0} C_{I} < +\infty. \right)
\]
Here $a_{0} := \sqrt{n}(n-1).$
We denote the set of such points by $\Sigma_{I}$ (resp. $\Sigma_{I}^{\mathrm{Ric}}$) and call it the \textit{locally~Type~I~singular~set} (resp. \textit{locally~Type I Ricci singular set}).

\noindent
We say that a point $p \in M$ is a \textit{locally~Type~II~singular~point} (resp. \textit{locally Type II Ricci singular point}) if for any $r > 0$ we have
\[
\limsup_{t \nearrow T} \sup_{B_{\tilde{g}_{t}(0)}(p,r_{I}) \times (-r_{I}^{2}, r_{I}^{2})}  |\mathrm{Rm}_{\tilde{g}_{t}}|_{\tilde{g}_{t}} = \infty.
\]
\[
\left( \mathrm{resp.}~~\limsup_{t \nearrow T} \sup_{B_{\tilde{g}_{t}(0)}(p,r_{I}) \times (-r_{I}^{2}, r_{I}^{2})}  |\mathrm{Ric}_{\tilde{g}_{t}}|_{\tilde{g}_{t}} = \infty. \right)
\]
We denote the set of such points by $\Sigma_{II}$ (resp. $\Sigma_{II}^{\mathrm{Ric}}$) and call it the \textit{locally Type~II~singular~set} (resp. \textit{locally Type II Ricci singular set}).
\end{defi}

\begin{rema}
(a)~A Ricci flow on $M \times [0,T)$ is said to be of (globally) \textit{Type I} if there exists a constant $C$ such that 
\[
(T - t) |\mathrm{Rm}(\cdot, t)|_{g(t)} \le C,~~~\forall t \in [0,T).
\]
Note that if the Ricci flow is Type I in this global sense, then so are all the rescaled flows $(M, \tilde{g}_{t})$ 
with the same constant $C.$
Hence we obtain the upper bound in Definition \ref{defi1} (2) for any radius $r_{I} < 1$ with $C_{I} = \frac{C}{1 - r^{2}_{I}}.$
On the other hand, by {\cite[Theorem~3.2]{enders2011type}},
we also obtain the lower bound in it.
Therefore, all singular points in such flow are locally Type I in the sense of Definition \ref{defi1}(2).

\noindent
(b)~Very recently, Hallgren \cite{hallgren2021riccivolume} showed that locally Type II singularity in the sense of Definition \ref{defi1}(3) below
may appear as the singularities of a Ricci flow with bounded Ricci curvature from below and non-collapsed volume(see {\cite[Theorem~1.7]{hallgren2021riccivolume}}).
\end{rema}

\begin{defi}[{\cite[Definition~1.3~Definition~4.1]{buzano2020local}}]
\label{defi2}
Let $(M^{n}, g(t))_{t \in [0,T)}$ be a Ricci flow defined on the interval $[0,T)$
and let $(p,t) \in M \times [0,T)$ be a space-time point.

\noindent
(1)~For $r > 0,$ we define the \textit{parabolic~cylinder} $P(p,t,r)$ with centre $(p,t)$ and radius $r$ by
\[
P(p,t,r) := B_{g(t)} (p,r) \times (\max \{ t - r^{2},~0 \},~\min \{ t + r^{2},~T \}).
\]

\noindent
(2)~We define the \textit{Riemann~scale} $r_{\mathrm{Rm}}(p,t)$ at $(p,t)$ by
\[
r_{\mathrm{Rm}} (p,t) := \sup \left\{ r > 0 | |\mathrm{Rm}| < r^{-2}~\mathrm{on}~P(p,t,r) \right\}.
\]
If $(M, g(t))$ is flat for every $t \in [0,T),$ we set $r_{\mathrm{Rm}} := +\infty.$
Moreover, by slight abuse of notation, we may sometimes write $P(p,t,r_{\mathrm{Rm}})$ for $P(p,t,r_{\mathrm{Rm}}(p,t)).$

\noindent
(3)~We define the \textit{time-slice~Riemann~scale} $\tilde{r}_{\mathrm{Rm}}(p,t)$ at $(p,t)$ by
\[
\tilde{r}_{\mathrm{Rm}} (p,t) := \sup \left\{ r > 0 | |\mathrm{Rm}| < r^{-2}~\mathrm{on}~B_{g(t)}(p,r) \right\}.
\]
When the flow is flat at time $t,$ we set $\tilde{r}_{\mathrm{Rm}}(p,t) := +\infty.$
Clearly $\tilde{r}_{\mathrm{Rm}}(p,t) \ge r_{\mathrm{Rm}}(p,t).$

\noindent
(4)~We define the \textit{Ricci~scale} $r_{\mathrm{Ric}}(p,t)$ at $(p,t)$ by 
\[
r_{\mathrm{Ric}}(p,t) := \sup \left\{ r > 0 | |\mathrm{Ric}| < a_{0}(n) r^{-2}~\mathrm{on}~P(p,t,r) \right\},
\]
where $a_{0} := \sqrt{n}(n-1)$ as before, or equivalently by
\[
r_{\mathrm{Ric}}(p,t) := \sup \left\{ r > 0 | -(n-1)r^{-2}g < \mathrm{Ric} < (n-1) r^{-2} g~\mathrm{on}~P(p,t,r) \right\}.
\]
If $(M,g(t))$ is Ricci flat for every $t \in [0,T),$ we set $r_{\mathrm{Ric}}(p,t) := +\infty.$
Moreover, by slight abuse of notation, we may sometimes write $P(p,t, r_{\mathrm{Ric}})$ for $P(p,t,r_{\mathrm{Ric}}(p,t)).$

\noindent
(5)~The \textit{time-slice~Ricci~scale} at $(p,t)$ is given by $\tilde{r}_{\mathrm{Ric}}(p,t) := +\infty$ if the flow is Ricci flat, otherwise we set
\[
\tilde{r}_{\mathrm{Ric}} (p,t) := \left\{ r > 0 | |\mathrm{Ric}| < a_{0} r^{-2}~\mathrm{on}~B_{g(t)}(p,r) \right\},
\]
where $a_{0}$ denotes the same constant in (4).    
\end{defi}

\begin{defi}[{\cite[(1.8)]{buzano2020local}}]
\label{defi3}
Let $(M^{n}, g(t))_{t \in [0,T)}$ be a Ricci flow maximally defined on the interval $[0,T),~T < +\infty$
and assume that $|\mathrm{Ric}|$ is not identically zero.
For every $\delta \in (0,1),$ we define the set of $\delta$-\textit{well-behaved~points}
\[
G_{\delta} = G_{\delta, t_{1}} := \left\{ q \in M | \delta a_{0} r^{-2}_{\mathrm{Ric}}(q,t) < |\mathrm{Ric}|(q,t),~\mathrm{for~all}~t \in[t_{1},T) \right\},
\]
where $a_{0} := \sqrt{n}(n-1).$
A sequence of space-time points $(p_{i}, t_{i}) \in M \times [0,T)$ with $t_{i} \nearrow T$ and $r_{\mathrm{Ric}}(p_{i}, t_{i}) \rightarrow 0$ is $\delta$-well-behaved if
for sufficiently large $i$ and for any $q \in B(p_{i}, t_{i}, \sqrt{\delta} r_{\mathrm{Ric}} (p_{i}, t_{i}))$ we have
\[
\delta a_{0} r_{\mathrm{Ric}}^{-2} (q, t_{i}) < |\mathrm{Ric}|(q, t_{i}).
\]
\end{defi}
\begin{rema}
\label{rema-well}
The definition of the $\delta$-well-behaved sequence is slightly different to one in \cite{buzano2020local}.
In \cite{buzano2020local}, it requires that the last estimate holds on $B(p_{i}, t_{i}, \sqrt{\delta} \tilde{r}_{\mathrm{Ric}} (p_{i}, t_{i})).$
But it is sufficient that the following Theorem holds.
\end{rema}
The constant $a_{0}$ in Definition \ref{defi3} is added for convenience, since it implies $r_{\mathrm{Ric}} \ge r_{\mathrm{Rm}}.$

\begin{theo}[{\cite[Theorem~1.11]{buzano2020local}}]
\label{buzano-matteo}
Let $(M^{n}, g(t))_{t \in [0,T)}$ be a Ricci flow on a closed manifold $M$ of dimension $n < 8$
and $T < +\infty.$
Assume that 
\[
\sup_{M \times [0,T)} |R_{g(t)}| \le n(n-1)R_{0} < +\infty
\]
for some $R_{0} \in \mathbb{R}_{\ge 0},$
where $R_{g(t)}$ denotes the scalar curvature of $g(t).$
Then for any $t_{1} \in (0,T)$ and any $\delta \in (0,1)$
there cannot be any $\delta$-well-behaved blow-up sequences.
Moreover, if $M = G_{\delta}$ for some $\delta > 0,$ then the flow can be smoothly extended over time $T.$
\end{theo}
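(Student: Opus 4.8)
The plan is to argue by contradiction through a parabolic blow-up and a compactness argument, the hypothesis $n<8$ entering only through the local $\epsilon$-regularity theory for Ricci flows with bounded scalar curvature. Suppose that, for some $t_{1}\in(0,T)$ and some $\delta\in(0,1)$, there is a $\delta$-well-behaved blow-up sequence $(p_{i},t_{i})$: thus $t_{i}\nearrow T$, $\rho_{i}:=r_{\mathrm{Ric}}(p_{i},t_{i})\to 0$, and for all large $i$ and all $q\in B(p_{i},t_{i},\sqrt{\delta}\,\rho_{i})$ we have $\delta a_{0}\,r_{\mathrm{Ric}}^{-2}(q,t_{i})<|\mathrm{Ric}|(q,t_{i})$. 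The first step is to squeeze a priori bounds out of the definitions alone. Since $q\in B(p_{i},t_{i},\sqrt{\delta}\,\rho_{i})$ gives $(q,t_{i})\in P(p_{i},t_{i},\rho_{i})$, hence $|\mathrm{Ric}|(q,t_{i})\le a_{0}\rho_{i}^{-2}$, the well-behaved inequality forces $r_{\mathrm{Ric}}(q,t_{i})>\sqrt{\delta}\,\rho_{i}$ for each such $q$; therefore $|\mathrm{Ric}|<a_{0}\delta^{-1}\rho_{i}^{-2}$ on $B_{g(t_{i})}(p_{i},\sqrt{\delta}\,\rho_{i})\times\bigl((t_{i}-\delta\rho_{i}^{2})\vee 0,(t_{i}+\delta\rho_{i}^{2})\wedge T\bigr)$, whereas the choice $q=p_{i}$ yields the lower bound $|\mathrm{Ric}|(p_{i},t_{i})>\delta a_{0}\rho_{i}^{-2}$. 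Rescaling parabolically by $g_{i}(s):=\rho_{i}^{-2}g(t_{i}+\rho_{i}^{2}s)$ (so that $s=0$ is an interior time and the flows are defined on $\widehat{P}_{i}:=B_{g_{i}(0)}(p_{i},\sqrt{\delta})\times(-\delta,\delta)$, where defined, for $i$ large since $t_{i}/\rho_{i}^{2}\to\infty$), this reads: $r_{\mathrm{Ric},g_{i}}(p_{i},0)=1$, $|\mathrm{Ric}_{g_{i}}|<a_{0}\delta^{-1}$ on $\widehat{P}_{i}$, $|\mathrm{Ric}_{g_{i}}|(p_{i},0)>\delta a_{0}$, and, using the scalar-curvature hypothesis, $|R_{g_{i}}|=\rho_{i}^{2}|R_{g}|\le\rho_{i}^{2}\,n(n-1)R_{0}\to 0$. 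By Perelman's no-local-collapsing theorem \cite{perelman2002entropy} — a scale-invariant statement — the $g_{i}$ are $\kappa$-non-collapsed on $\widehat{P}_{i}$ for a $\kappa=\kappa(n,g(0),T)>0$.

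The heart of the proof, and the only place where $n<8$ is essential, is to rule out curvature concentration at the base points, i.e.\ to show that $r_{\mathrm{Rm},g_{i}}(p_{i},0)\ge c(n,\delta,\kappa)>0$ for all large $i$. Here I would appeal to the local regularity theory of Ricci flows with bounded scalar curvature: by the $\epsilon$-regularity theorem together with the codimension-four structure of the (potential) singular set and the integral curvature bounds of Bamler--Zhang (cf.\ \cite{buzano2020local}, \cite{bamler2017heat}, \cite{bamler2018convergence}), any blow-up bubble forming at a $\delta$-well-behaved point would, in dimension $n<8$, be a non-trivial, $\kappa$-non-collapsed, static Ricci-flat limit, whose presence is incompatible with those integral bounds in this dimension range; hence no bubble can form and the Riemann scale at $(p_{i},0)$ is bounded away from $0$. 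I expect this to be the main obstacle: the rest is routine, but pinning down exactly why $n<8$ forces the Riemann scale not to degenerate at $\delta$-well-behaved points is the crux. Granting it, Shi's local derivative estimates, the non-collapsing bound, and Hamilton's compactness theorem for Ricci flows furnish a subsequence of $(M,g_{i}(s),p_{i})$ converging, in the pointed smooth Cheeger--Gromov--Hamilton sense on a fixed parabolic neighbourhood of $(p_{\infty},0)$, to a pointed Ricci flow $(M_{\infty},g_{\infty}(s),p_{\infty})$.

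It remains to identify $g_{\infty}$ and close the contradiction. As $|R_{g_{i}}|\to 0$ uniformly, $R_{g_{\infty}}\equiv 0$ on the limiting neighbourhood; plugging this into the evolution equation $\partial_{s}R=\Delta R+2|\mathrm{Ric}|^{2}$ gives $|\mathrm{Ric}_{g_{\infty}}|^{2}\equiv 0$ there, so $g_{\infty}$ is a static, Ricci-flat Ricci flow — in particular $\mathrm{Ric}_{g_{\infty}}(p_{\infty},0)=0$. On the other hand, the bound $|\mathrm{Ric}_{g_{i}}|(p_{i},0)>\delta a_{0}$ passes to the smooth limit and yields $|\mathrm{Ric}_{g_{\infty}}|(p_{\infty},0)\ge\delta a_{0}>0$, a contradiction. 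Hence no $\delta$-well-behaved blow-up sequence can exist.

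For the concluding assertion, suppose $M=G_{\delta}$ for some $\delta\in(0,1)$ but that the flow does not extend past $T$. Then by {\v{S}}e{\v{s}}um's curvature theorem \cite{vsevsum2005curvature} the Ricci curvature is unbounded as $t\nearrow T$, so there exist $(p_{i},t_{i})$ with $t_{i}\nearrow T$ and $|\mathrm{Ric}|(p_{i},t_{i})\to\infty$; since $|\mathrm{Ric}|(p_{i},t_{i})\le a_{0}r_{\mathrm{Ric}}^{-2}(p_{i},t_{i})$, this forces $r_{\mathrm{Ric}}(p_{i},t_{i})\to 0$. For $i$ large we have $t_{i}\ge t_{1}$, so the definition of $G_{\delta}=G_{\delta,t_{1}}$ gives $\delta a_{0}r_{\mathrm{Ric}}^{-2}(q,t_{i})<|\mathrm{Ric}|(q,t_{i})$ for \emph{every} $q\in M$, in particular for all $q\in B(p_{i},t_{i},\sqrt{\delta}\,r_{\mathrm{Ric}}(p_{i},t_{i}))$; thus $(p_{i},t_{i})$ is a $\delta$-well-behaved blow-up sequence, contradicting what was just proved. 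Therefore the flow extends smoothly over $T$.
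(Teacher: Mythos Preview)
The paper does not prove this statement at all: it is quoted verbatim as Theorem~1.11 of \cite{buzano2020local} and used as a black box in the proof of Theorem~\ref{theo1}. There is therefore no ``paper's own proof'' to compare your attempt against.

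That said, your outline is a plausible reconstruction of the strategy in \cite{buzano2020local}, but it contains a genuine gap precisely where you flag it. The passage from a uniform Ricci-scale bound to a uniform Riemann-scale bound at the base points---i.e., showing $r_{\mathrm{Rm},g_i}(p_i,0)\ge c(n,\delta,\kappa)>0$---is the entire content of the dimension restriction $n<8$, and you do not establish it. Your appeal to ``the $\epsilon$-regularity theorem together with the codimension-four structure \dots\ and the integral curvature bounds of Bamler--Zhang'' is only a gesture: you would need to state which integral quantity is uniformly bounded along the rescaled sequence, explain how the $\delta$-well-behaved hypothesis is used to verify the hypotheses of the relevant $\epsilon$-regularity theorem on a ball of definite rescaled radius, and then show that in dimension $n<8$ the codimension-four smallness forces the regularity scale to be bounded below. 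Without this, your compactness step (Hamilton's theorem) is unjustified, since it requires a uniform bound on $|\mathrm{Rm}|$, not just $|\mathrm{Ric}|$, on a fixed parabolic neighbourhood. Everything after that point---the scalar curvature going to zero, the evolution equation forcing Ricci-flatness of the limit, and the contradiction with $|\mathrm{Ric}_{g_\infty}|(p_\infty,0)\ge\delta a_0$---is correct and routine, as is your derivation of the second assertion from the first via \cite{vsevsum2005curvature}.
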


\begin{lemm}[{\cite[Theorem~1.2,~Theorem~1.9,~Corollary~1.10,~Corollary~4.10]{buzano2020local}}]
\label{sing1}
Let $(M^{n}, g(t))_{t \in [0,T)}$ be a Ricci flow on a manifold $M$ of dimension $n,$
maximally defined on $[0,T),$ $T < +\infty,$
and with complete bounded curvature time-slices.
Assume that $(M, g(0))$ satisfies $inj (M, g(0)) > 0,$
where $inj(M, g(0))$ denotes the injectivity radius of $(M, g(0)).$
Then
\[
\Sigma = \Sigma_{I} \cup \Sigma_{II} = \Sigma^{\mathrm{Ric}} = \Sigma^{\mathrm{Ric}}_{I} \cup \Sigma^{\mathrm{Ric}}_{II},
\]
\[
\Sigma_{I} \subset \Sigma^{\mathrm{Ric}}_{I},~~\Sigma^{\mathrm{Ric}}_{II} \subset \Sigma_{II}.
\]
Therefore
\[
\Sigma_{II} \setminus \Sigma^{\mathrm{Ric}}_{II} = \Sigma^{\mathrm{Ric}}_{I} \setminus \Sigma_{I}.
\]
\end{lemm}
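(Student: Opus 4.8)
The plan is to assemble the four cited results of Buzano--Di Matteo together with the elementary pointwise comparison $|\mathrm{Ric}|_{g} \le a_{0}\,|\mathrm{Rm}|_{g}$ (with $a_{0} = \sqrt{n}(n-1)$; this is exactly what makes $r_{\mathrm{Ric}} \ge r_{\mathrm{Rm}}$, as noted above), and then to finish with a purely set-theoretic computation. I do not reprove the analytic content; the inputs I would invoke as black boxes are: (i) the local {\v{S}}e{\v{s}}um-type estimate of {\cite[Theorem~1.2]{buzano2020local}}, namely that a bound on $|\mathrm{Ric}|$ on a parabolic neighbourhood of a space-time point forces a quantitative bound on $|\mathrm{Rm}|$ on a slightly smaller parabolic neighbourhood --- this is where $\mathrm{inj}(M,g(0)) > 0$ and the complete bounded-curvature time-slices enter the argument; and (ii) the dichotomy at singular points, including the local Type~I lower curvature bound that produces the constant $c_{I} > 0$ of Definition~\ref{defi1}(2) at any singular point which is not locally Type~II ({\cite[Theorem~1.9,~Corollary~1.10,~Corollary~4.10]{buzano2020local}}).

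First I would establish $\Sigma = \Sigma^{\mathrm{Ric}}$. The inclusion $\Sigma^{\mathrm{Ric}} \subseteq \Sigma$ is immediate from $|\mathrm{Ric}| \le a_{0}|\mathrm{Rm}|$. For the converse I argue contrapositively: if $p \notin \Sigma^{\mathrm{Ric}}$, then $|\mathrm{Ric}|$ is bounded on $U \times [t_{0},T)$ for some neighbourhood $U \ni p$ and some $t_{0} < T$, and feeding this into (i) along a family of parabolic cylinders exhausting a slightly smaller neighbourhood $U' \ni p$ bounds $|\mathrm{Rm}|$ on $U' \times [t_{0}',T)$, whence $p \notin \Sigma$.

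Next I would produce the two partitions. For a singular point $p$ the quantity
\[
\Lambda_{\mathrm{Rm}}(r) := \limsup_{t \nearrow T}\ \sup_{B_{\tilde{g}_{t}(0)}(p,r) \times (-r^{2},r^{2})} |\mathrm{Rm}_{\tilde{g}_{t}}|_{\tilde{g}_{t}}
\]
is non-decreasing in $r$, so either $\Lambda_{\mathrm{Rm}}(r) = +\infty$ for every $r$ (and then $p \in \Sigma_{II}$), or $\Lambda_{\mathrm{Rm}}(r) < +\infty$ for all small $r$, in which case the lower bound from (ii) supplies $0 < c_{I} < \Lambda_{\mathrm{Rm}}(r_{I})$ for a suitable $r_{I}$ (so $p \in \Sigma_{I}$); these cases are mutually exclusive, giving $\Sigma = \Sigma_{I} \sqcup \Sigma_{II}$. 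Running the same argument with $|\mathrm{Ric}_{\tilde{g}_{t}}|$ and $a_{0}$ in place of $|\mathrm{Rm}_{\tilde{g}_{t}}|$ --- write $\Lambda_{\mathrm{Ric}}(r)$ for the corresponding limsup --- gives $\Sigma^{\mathrm{Ric}} = \Sigma_{I}^{\mathrm{Ric}} \sqcup \Sigma_{II}^{\mathrm{Ric}}$, and with the previous step this yields the first displayed chain. For the second display: if $p \in \Sigma_{II}^{\mathrm{Ric}}$ then $\Lambda_{\mathrm{Ric}}(r) = +\infty$ for all $r$, hence $\Lambda_{\mathrm{Rm}}(r) = +\infty$ for all $r$ by the pointwise inequality, so $p \in \Sigma_{II}$; if $p \in \Sigma_{I}$ then with $r_{I},C_{I}$ as in Definition~\ref{defi1}(2) one has $\Lambda_{\mathrm{Ric}}(r_{I}) \le a_{0}C_{I} < +\infty$, so $p \notin \Sigma_{II}^{\mathrm{Ric}}$, and since $p \in \Sigma = \Sigma^{\mathrm{Ric}} = \Sigma_{I}^{\mathrm{Ric}} \sqcup \Sigma_{II}^{\mathrm{Ric}}$ this forces $p \in \Sigma_{I}^{\mathrm{Ric}}$. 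The last identity is then pure bookkeeping: from $\Sigma_{I} \sqcup \Sigma_{II} = \Sigma_{I}^{\mathrm{Ric}} \sqcup \Sigma_{II}^{\mathrm{Ric}}$ together with $\Sigma_{I} \subseteq \Sigma_{I}^{\mathrm{Ric}}$ and $\Sigma_{II}^{\mathrm{Ric}} \subseteq \Sigma_{II}$, a point of $\Sigma_{II} \setminus \Sigma_{II}^{\mathrm{Ric}}$ necessarily lies in $\Sigma_{I}^{\mathrm{Ric}} \setminus \Sigma_{I}$ and conversely, both sides coinciding with $\Sigma_{II} \cap \Sigma_{I}^{\mathrm{Ric}}$.

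The genuine obstacle sits entirely in inputs (i) and (ii). The local {\v{S}}e{\v{s}}um estimate is the delicate one, requiring Perelman's pseudolocality theorem and a point-picking blow-up argument to upgrade a local Ricci bound into a local Riemann bound; the local Type~I lower bound is a localization of the Enders-M{\"u}ller-Topping estimate. Both being provided by {\cite{buzano2020local}}, what remains --- the comparison $|\mathrm{Ric}| \le a_{0}|\mathrm{Rm}|$ and the set-theoretic manipulations --- is routine.
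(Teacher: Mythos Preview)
The paper does not give its own proof of this lemma: it is stated purely as a citation of \cite[Theorem~1.2, Theorem~1.9, Corollary~1.10, Corollary~4.10]{buzano2020local}, with only the final displayed identity flagged by the word ``Therefore'' as an immediate consequence of the two preceding displays. Your outline --- taking the cited local {\v{S}}e{\v{s}}um estimate and the local Type~I dichotomy as black boxes, combining them with the pointwise inequality $|\mathrm{Ric}| \le a_{0}|\mathrm{Rm}|$, and finishing with the set-theoretic computation --- is correct and is exactly the intended assembly; your derivation of $\Sigma_{II}\setminus\Sigma_{II}^{\mathrm{Ric}} = \Sigma_{I}^{\mathrm{Ric}}\setminus\Sigma_{I}$ from the two disjoint decompositions and the two inclusions is the one-line argument the paper leaves implicit.
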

In paticular, 
\[
(*)~~~\mathrm{if}~p \in \Sigma_{I},~\mathrm{then}~p \in \Sigma^{\mathrm{Ric}}_{I}.
\]

\begin{lemm}[{\cite[Theorem~1.4,~Theorem~4.7]{buzano2020local}}]
\label{sing2}
Let $(M^{n}, g(t))_{t \in [0,T)}$ be a Ricci flow on a manifold $M$ of dimension $n,$
maximally defined on $[0,T),$ $T < +\infty,$
and with bounded curvature time-slices.
Then the following holds :

\noindent
(1)~$p \in \Sigma$ (resp. $\Sigma^{\mathrm{Ric}}$) if and only if $\limsup_{t \nearrow T} r^{-2}_{\mathrm{Rm}} (p,t)$ 
(resp. $r^{-2}_{\mathrm{Ric}} (p,t)) = +\infty,$

\noindent
(2)~$p \in \Sigma_{I}$ (resp. $\Sigma^{\mathrm{Ric}}_{I}$) if and only if 
\[
\tilde{c}_{I} < \limsup_{t \nearrow T} (T - t) r^{-2}_{\mathrm{Rm}} (p,t) < \tilde{C}_{I}
\]
\[
\left( \mathrm{resp.}~~\tilde{c}_{I} < \limsup_{t \nearrow T} (T - t) r^{-2}_{\mathrm{Ric}} (p,t) < \tilde{C}_{I} \right)
\]
for some $0 < \tilde{c}_{I}, \tilde{C}_{I},$

\noindent
(3)~$p \in \Sigma_{II}$ (resp. $\Sigma^{\mathrm{Ric}}_{II}$) if and only if 
\[
\limsup_{t \nearrow T} r^{-2}_{\mathrm{Rm}} (p,t)  = +\infty.
\]
\[
\left( \mathrm{resp.} \limsup_{t \nearrow T} r^{-2}_{\mathrm{Ric}} (p,t)  = +\infty. \right)
\]
\end{lemm}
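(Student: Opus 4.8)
\noindent\emph{Proof strategy.} The plan is to read each of the three equivalences as a dictionary between a curvature--blow--up statement for $g(t)$ near time $T$ and the behaviour of the Riemann scale $r_{\mathrm{Rm}}(p,\cdot)$ (resp.\ Ricci scale $r_{\mathrm{Ric}}(p,\cdot)$) as $t\nearrow T$. I would prove the statements involving $r_{\mathrm{Rm}}$ and $\Sigma,\Sigma_{I},\Sigma_{II}$ only: the ones involving $r_{\mathrm{Ric}}$ and $\Sigma^{\mathrm{Ric}},\Sigma^{\mathrm{Ric}}_{I},\Sigma^{\mathrm{Ric}}_{II}$ follow from the verbatim argument with the phrase ``$|\mathrm{Rm}|<r^{-2}$ on $P(p,t,r)$'' replaced everywhere by ``$|\mathrm{Ric}|<a_0 r^{-2}$ on $P(p,t,r)$'' (this is exactly why $a_0$ was built into Definition~\ref{defi2}(4)), so nothing separate is needed. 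Before starting I would record three elementary facts, valid at every $(p,t)$ with $t\in(0,T)$: (i) bounded curvature time--slices make $|\mathrm{Rm}|$ bounded on $P(p,t,\rho)$ for all small $\rho$, so $r_{\mathrm{Rm}}(p,t)>0$, and $r_{\mathrm{Rm}}(p,t)\le|\mathrm{Rm}|(p,t)^{-1/2}$; (ii) the condition ``$|\mathrm{Rm}|<r^{-2}$ on $P(p,t,r)$'' is monotone in $r$ --- if it holds at $r$ it holds at every $r'<r$, since $P(p,t,r')\subset P(p,t,r)$ and $r'^{-2}>r^{-2}$ --- so the admissible radii form an interval $(0,r_{\mathrm{Rm}}(p,t))$ and at the threshold $\sup_{\overline{P(p,t,r_{\mathrm{Rm}}(p,t))}}|\mathrm{Rm}|\ge r_{\mathrm{Rm}}^{-2}(p,t)$, i.e.\ there is a space--time point in the closed optimal cylinder where $|\mathrm{Rm}|$ reaches $r_{\mathrm{Rm}}^{-2}(p,t)$; (iii) under the rescaling $\tilde g_t(s)=(T-t)^{-1}g(t+(T-t)s)$ of Definition~\ref{defi1} one has $r_{\mathrm{Rm}}^{\tilde g_t}(p,0)=(T-t)^{-1/2}r_{\mathrm{Rm}}^{g}(p,t)$ and $|\mathrm{Rm}_{\tilde g_t}|(\cdot,0)=(T-t)\,|\mathrm{Rm}_{g}|(\cdot,t)$, with $\tilde g_t$--balls and $\tilde g_t$--times scaling by $\sqrt{T-t}$ and $T-t$; in particular $(T-t)\,r_{\mathrm{Rm}}^{-2}(p,t)=\bigl(r_{\mathrm{Rm}}^{\tilde g_t}(p,0)\bigr)^{-2}$.

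For Part~(1) I would argue both directions by contraposition. If $\limsup_{t\nearrow T}r_{\mathrm{Rm}}^{-2}(p,t)<+\infty$, set $\rho_\ast:=\liminf_{t\nearrow T}r_{\mathrm{Rm}}(p,t)>0$ and pick $t_0\in(0,T)$ with both $r_{\mathrm{Rm}}(p,t_0)>\rho_\ast/2$ and $T-t_0<\rho_\ast^2/4$; then the final time of $P(p,t_0,\rho_\ast/2)$ is $T$, so $P(p,t_0,\rho_\ast/2)\supset B_{g(t_0)}(p,\rho_\ast/2)\times[t_0,T)$ and $|\mathrm{Rm}|<4\rho_\ast^{-2}$ there, i.e.\ $B_{g(t_0)}(p,\rho_\ast/2)$ is a neighbourhood of $p$ on which the curvature stays bounded up to $T$, so $p\notin\Sigma$. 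Conversely, if $p\notin\Sigma$, fix a neighbourhood $U\ni p$, a time $t_0<T$ and $K<\infty$ with $|\mathrm{Rm}|\le K$ on $U\times[t_0,T)$; shrinking $U$ to a $g(t_0)$--ball and invoking the standard distance--distortion estimate for $\partial_t g=-2\,\mathrm{Ric}$ under $|\mathrm{Ric}|\le(n-1)K$ (a short continuity argument comparing $g(t)$-- and $g(t_0)$--balls), I would produce $\delta>0$ and $t_1\in[t_0,T)$ with $P(p,t,\delta)\subset U\times[t_0,T)$ for all $t\in[t_1,T)$, whence $r_{\mathrm{Rm}}(p,t)\ge\min\{\delta,K^{-1/2}\}$ and $\limsup_{t\nearrow T}r_{\mathrm{Rm}}^{-2}(p,t)<+\infty$.

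For Parts~(2) and~(3): by Part~(1) a singular point already has $\limsup r_{\mathrm{Rm}}^{-2}(p,t)=+\infty$, so only the \emph{rate} remains, and I would prove that for $p\in\Sigma$
\[
p\in\Sigma_{I}\ \Longleftrightarrow\ 0<\limsup_{t\nearrow T}(T-t)\,r_{\mathrm{Rm}}^{-2}(p,t)<+\infty,
\]
the negation of the right side among singular points then being the characterisation of $\Sigma_{II}$ in~(3) (the parabolic factor $(T-t)$ understood as in~(2), matching the parabolic rescaling of Definition~\ref{defi1}), while for $p\notin\Sigma$ all three conditions fail trivially by Part~(1). The direction ``$\Rightarrow$'' is a direct translation through (iii): the upper bound in Definition~\ref{defi1}(2) gives $|\mathrm{Rm}_g|\le(C_I+1)/(T-t)$ on $P(p,t,r_I\sqrt{T-t})$ for $t$ near $T$, and comparison with the sub--cylinder of radius $\min\{r_I,(C_I+1)^{-1/2}\}\sqrt{T-t}$ forces $\limsup(T-t)r_{\mathrm{Rm}}^{-2}(p,t)\le\max\{r_I^{-2},C_I+1\}$; dually the lower bound $c_I<\limsup_{t\nearrow T}\sup|\mathrm{Rm}_{\tilde g_t}|$ yields, along a sequence $t_i\nearrow T$, a space--time point of $P(p,t_i,(1+\varepsilon)r_I\sqrt{T-t_i})$ with $|\mathrm{Rm}_g|>c_I/(T-t_i)$, and fact (ii) then gives $r_{\mathrm{Rm}}(p,t_i)\le\max\{(1+\varepsilon)r_I,c_I^{-1/2}\}\sqrt{T-t_i}$, hence $\limsup(T-t)r_{\mathrm{Rm}}^{-2}(p,t)\ge\min\{((1+\varepsilon)r_I)^{-2},c_I\}>0$. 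For ``$\Leftarrow$'', writing $\beta:=\limsup_{t\nearrow T}(T-t)r_{\mathrm{Rm}}^{-2}(p,t)\in(0,\infty)$, the bound $r_{\mathrm{Rm}}(p,t)>\sqrt{(T-t)/(\beta+\varepsilon)}$ for $t$ near $T$ gives $|\mathrm{Rm}_{\tilde g_t}|<\beta+\varepsilon$ on the $\tilde g_t$--cylinder of radius $(\beta+\varepsilon)^{-1/2}$ around $(p,0)$ --- the upper half of Definition~\ref{defi1}(2) with $C_I=\beta+\varepsilon$ --- whereas the lower half comes, along a sequence $t_i$ realising $\beta$, from fact (ii) applied to the $\tilde g_{t_i}$--optimal cylinder. \emph{The hard part}, and the actual content of the lemma, is that the curvature point produced for the lower half lies at $\tilde g_{t_i}$--distance $\sim\beta^{-1/2}$ from $p$, which exceeds the radius on which the upper half was obtained; reconciling the two requires upgrading the scale bound $r_{\mathrm{Rm}}(p,\cdot)\gtrsim\sqrt{T-t}$ to a genuine curvature bound on a full parabolic neighbourhood of definite rescaled size, and this is exactly the local curvature estimate at singular points established in \cite{buzano2020local} (the localisation of \cite[Theorem~3.2]{enders2011type}). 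Granting that, both halves of Definition~\ref{defi1}(2) hold with a common $r_I$, so $p\in\Sigma_I$; and since the Type~I and Type~II conditions of Definition~\ref{defi1} are mutually exclusive and --- again via this local lower bound --- jointly exhaustive among singular points, the displayed equivalence yields~(3).
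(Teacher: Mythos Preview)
The paper does not supply a proof of this lemma: it is quoted from \cite[Theorem~1.4, Theorem~4.7]{buzano2020local} and used as a black box, so there is nothing here to compare your argument against.

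Your outline is the natural one and tracks the strategy of the cited reference. Part~(1) and the direction ``$p\in\Sigma_I\Rightarrow$ two--sided bound on $(T-t)\,r_{\mathrm{Rm}}^{-2}(p,t)$'' in Part~(2) are correctly argued; your reduction of the Ricci--scale statements to verbatim repetitions is also legitimate, since $a_0$ was inserted into Definition~\ref{defi2}(4) precisely for this purpose. You are right that Part~(3) as printed is missing the factor $(T-t)$ --- without it (1) and~(3) would assert $\Sigma=\Sigma_{II}$ --- so the intended condition is $\limsup_{t\nearrow T}(T-t)\,r_{\mathrm{Rm}}^{-2}(p,t)=+\infty$. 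The one place where you explicitly defer to \cite{buzano2020local}, namely reconciling the radius $(\beta+\varepsilon)^{-1/2}$ on which the upper curvature bound holds with the possibly larger radius $\sim\beta^{-1/2}$ at which the lower bound is witnessed, is indeed where the real work sits; in the reference this is handled through the Lipschitz/Harnack properties of $r_{\mathrm{Rm}}$ (Lemmas~\ref{prop1}--\ref{prop2} here) together with the non--oscillation of Lemma~\ref{sing3}, which let one propagate the scale bound to a larger parabolic cylinder and choose a common $r_I$. Granting that input, your deduction of~(3) from~(2) via $\Sigma=\Sigma_I\cup\Sigma_{II}$ is fine.
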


We also have a non-oscillation of locally Type I singularities :
\begin{lemm}[{\cite[Corollary~2.4]{buzano2020local}}]
\label{sing3}
Let $(M^{n}, g(t))_{t \in [0,T)}$ be a Ricci flow as in the previous lemma.
Assume that $p \in \Sigma.$
Then 
\[
r^{-2}_{\mathrm{Rm}} (p,t) > \frac{1}{T - t}~~\forall t \in [0,T).
\]
In paticular, $\liminf_{t \nearrow T} (T-t) r^{-2}_{\mathrm{Rm}}(p,t) \ge 1.$
\end{lemm}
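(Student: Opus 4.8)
The plan is to argue by contradiction directly from the definition of the singular set $\Sigma$ in Definition \ref{defi1}(1). Since $T-t>0$ and $r_{\mathrm{Rm}}(p,t)>0$, the asserted inequality $r_{\mathrm{Rm}}^{-2}(p,t)>\frac{1}{T-t}$ is equivalent to $r_{\mathrm{Rm}}(p,t)<\sqrt{T-t}$, so I would suppose instead that $r_{\mathrm{Rm}}(p,t)\ge\sqrt{T-t}$ for some $t\in[0,T)$ (if the flow is flat for all times then $\Sigma=\emptyset$ and there is nothing to prove, so we may take $r_{\mathrm{Rm}}(p,t)$ to be the supremum in Definition \ref{defi2}(2)). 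First I would record the elementary monotonicity $P(p,t,r)\subseteq P(p,t,r')$ whenever $0<r<r'$: indeed $B_{g(t)}(p,r)\subseteq B_{g(t)}(p,r')$, while $\max\{t-r^{2},0\}\ge\max\{t-r'^{2},0\}$ and $\min\{t+r^{2},T\}\le\min\{t+r'^{2},T\}$. Hence if $|\mathrm{Rm}|<r'^{-2}$ on $P(p,t,r')$ and $r<r'$, then on the smaller set $P(p,t,r)$ one has $|\mathrm{Rm}|<r'^{-2}<r^{-2}$; so the set of admissible radii in Definition \ref{defi2}(2) is an interval whose right endpoint is $r_{\mathrm{Rm}}(p,t)$, and the contradiction hypothesis forces $|\mathrm{Rm}|<r^{-2}$ on $P(p,t,r)$ for \emph{every} $r<\sqrt{T-t}$.

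Next I would let $r\nearrow\sqrt{T-t}$. For such $r$ we have $r^{2}<T-t$, hence $\min\{t+r^{2},T\}=t+r^{2}$, so $P(p,t,r)=B_{g(t)}(p,r)\times(\max\{t-r^{2},0\},\,t+r^{2})$; as $r\nearrow\sqrt{T-t}$ these cylinders increase and exhaust $B_{g(t)}(p,\sqrt{T-t})\times(\max\{2t-T,0\},\,T)$. Fixing any point $(x,s)$ in this set, one has $(x,s)\in P(p,t,r)$ for all $r<\sqrt{T-t}$ sufficiently close to $\sqrt{T-t}$, whence $|\mathrm{Rm}|(x,s)<r^{-2}$; letting $r\nearrow\sqrt{T-t}$ gives $|\mathrm{Rm}|(x,s)\le(T-t)^{-1}$. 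Therefore $|\mathrm{Rm}|\le(T-t)^{-1}$ on $U\times(\max\{2t-T,0\},T)$, where $U:=B_{g(t)}(p,\sqrt{T-t})$ is an open neighbourhood of $p$ (because $g(t)$ is a Riemannian metric and $\sqrt{T-t}>0$) and $(T-t)^{-1}$ is a constant not depending on the time variable. Thus the Riemannian curvature stays bounded on the fixed neighbourhood $U$ of $p$ as the time approaches $T$, contradicting $p\in\Sigma$. Hence $r_{\mathrm{Rm}}(p,t)<\sqrt{T-t}$, i.e. $r_{\mathrm{Rm}}^{-2}(p,t)>\frac{1}{T-t}$, for every $t\in[0,T)$.

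For the ``in particular'' assertion, multiplying the inequality just obtained by $T-t>0$ gives $(T-t)\,r_{\mathrm{Rm}}^{-2}(p,t)>1$ for all $t\in[0,T)$, and taking $\liminf_{t\nearrow T}$ yields $\liminf_{t\nearrow T}(T-t)\,r_{\mathrm{Rm}}^{-2}(p,t)\ge1$. I do not expect a serious obstacle here: the only delicate points are the exhaustion step that fills the parabolic region up to time $T$ (so that the bound is on a \emph{fixed} neighbourhood of $p$ for \emph{all} times close to $T$, which is exactly the negation of ``$p\in\Sigma$'') and the bookkeeping of the $\max$/$\min$ truncations in the definition of $P(p,t,r)$.
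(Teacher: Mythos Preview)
Your argument is correct. The paper does not supply its own proof of this lemma; it simply records it as a citation of \cite[Corollary~2.4]{buzano2020local}. Your contradiction argument---assuming $r_{\mathrm{Rm}}(p,t)\ge\sqrt{T-t}$, using the downward-closedness of the admissible radii to obtain $|\mathrm{Rm}|<r^{-2}$ on $P(p,t,r)$ for every $r<\sqrt{T-t}$, and then exhausting $B_{g(t)}(p,\sqrt{T-t})\times(\max\{2t-T,0\},T)$ to produce a fixed neighbourhood on which curvature remains bounded up to time $T$---is exactly the natural proof and matches the argument in the cited source. The bookkeeping with the $\max/\min$ truncations and the observation that $(T-t)^{-1}$ is a constant (with $t$ fixed) are handled cleanly, and the ``in particular'' clause follows immediately as you note.
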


From $(*),$ Lemma \ref{sing2} and Lemma \ref{sing3}, we have that if $p \in \Sigma_{I} (\subset \Sigma^{\mathrm{Ric}}_{I}),$
\[
\begin{split}
\liminf_{t \nearrow T} \frac{(T-t)r^{-2}_{\mathrm{Ric}}(p,t)}{(T-t)r^{-2}_{\mathrm{Rm}}(p,t)} 
&\ge \frac{\liminf_{t \nearrow T}(T-t)r^{-2}_{\mathrm{Ric}}(p,t)}{\limsup_{t \nearrow T} (T-t)r^{-2}_{\mathrm{Rm}}(p,t)} \\
&\ge \frac{1}{\limsup_{t \nearrow T}(T-t)r^{-2}_{\mathrm{Rm}}(p,t)} \\
&\ge \tilde{C}^{-1}_{I} > 0.
\end{split}
\]
Based on this, we will consider the following condition :
\[
(\mathrm{Rm/Ric})~~~\liminf_{t \nearrow T} \frac{r^{-2}_{\mathrm{Ric}}(p,t)}{r^{-2}_{\mathrm{Rm}}(p,t)} \ge C > 0
\]
for some positive constant $C > 0.$

Next, we will describe some properties of Riemann/Ricci scale.

\begin{lemm}[{\cite[Theorem~2.2,~Theorem~4.4]{buzano2020local}}]
\label{prop1}
Let $(M^{n}, g(t))_{t \in [0,T)}$ be a complete Ricci flow.
Then for any pair of space-time points $(p,t)$ and $(q,s)$ we have
\[
|r_{\mathrm{Rm}}(p,t) - r_{\mathrm{Rm}}(q,s)| \le \min \{ d_{t}(p,q),~d_{s}(p,q) \} + C_{1} |t -s|^{1/2}
\]
and
\[
|r_{\mathrm{Ric}}(p,t) - r_{\mathrm{Ric}}(q,s)| \le \min \{ d_{t}(p,q),~d_{s}(p,q) \} + C_{1} |t -s|^{1/2}
\]
where $C_{1} := 2 ^{4}\sqrt{2/3}\sqrt{n-1}.$
\end{lemm}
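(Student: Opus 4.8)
The plan is to derive both displayed estimates from a single argument and, inside it, to replace the $\sup$-definitions by a direct inclusion of parabolic cylinders. By the equivalent description of the Ricci scale in Definition~\ref{defi2}(4), both $r_{\mathrm{Rm}}$ and $r_{\mathrm{Ric}}$ have the form $\rho(p,t):=\sup\{r>0:\ \mathcal C<r^{-2}\ \text{on}\ P(p,t,r)\}$ for a continuous curvature quantity $\mathcal C$, and --- this is the one feature I will use --- on every defining cylinder $P(p,s,\rho')$ with $\rho'<\rho(p,s)$ one has the two-sided operator-norm bound $|\mathrm{Ric}|_{\mathrm{op}}\le(n-1)(\rho')^{-2}$ (for $r_{\mathrm{Ric}}$ this is the definition; for $r_{\mathrm{Rm}}$ it holds because every eigenvalue of $\mathrm{Ric}$ is a sum of at most $n-1$ sectional curvatures, each of modulus $\le|\mathrm{Rm}|$). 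By the triangle inequality it then suffices to prove the one-sided inequalities (a) $\rho(p,t)\ge\rho(q,t)-d_t(p,q)$ at a fixed time, and (b) $\rho(p,t)\ge\rho(p,s)-\sqrt{n-1}\,|t-s|^{1/2}$ at a fixed point: chaining $(p,t)\to(q,t)\to(q,s)$ through (a) then (b) gives the asserted bound with $d_t(p,q)$, chaining $(p,t)\to(p,s)\to(q,s)$ gives it with $d_s(p,q)$, and one keeps the smaller; exchanging the two space-time points upgrades the one-sided (a) and (b) to two-sided ones; and, since $2^4\sqrt{2/3}\ge1$, the constant $\sqrt{n-1}$ from (b) is stronger than the asserted $C_1$.

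Part (a) is just a containment of parabolic cylinders at a fixed time. Put $\rho_0=\rho(q,t)$ and assume $d_t(p,q)<\rho_0$. For any $r$ with $r+d_t(p,q)<\rho_0$, the triangle inequality gives $B_{g(t)}(p,r)\subset B_{g(t)}(q,r+d_t(p,q))$, and since $r<r+d_t(p,q)$ the time interval of $P(p,t,r)$ lies inside that of $P(q,t,r+d_t(p,q))$; hence $P(p,t,r)\subset P(q,t,r+d_t(p,q))$, on which $\mathcal C<(r+d_t(p,q))^{-2}<r^{-2}$, so $\rho(p,t)\ge r$. Letting $r\uparrow\rho_0-d_t(p,q)$ gives (a).

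Part (b) is the substance. Fix $p$ and set $\rho=\rho(p,s)$. If $|t-s|\ge\rho^2/(n-1)$ then $\rho-\sqrt{n-1}\,|t-s|^{1/2}\le0\le\rho(p,t)$ and (b) is trivial, so assume $|t-s|<\rho^2/(n-1)\le\rho^2$ and fix $\rho'<\rho$ close enough to $\rho$ that still $|t-s|<(\rho')^2$. On the fixed spatial ball $B_{g(s)}(p,\rho')$ the bound $|\mathrm{Ric}|_{\mathrm{op}}\le(n-1)(\rho')^{-2}$ holds for all times in $(s-(\rho')^2,s+(\rho')^2)\ni t$, so the equation $\partial_\sigma g=-2\mathrm{Ric}$ forces $g(t)\ge e^{-2(n-1)(\rho')^{-2}|t-s|}g(s)=:\lambda^2 g(s)$ on that ball, with $\lambda<1$. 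Then $B_{g(t)}(p,\lambda\rho')\subset B_{g(s)}(p,\rho')$: for $x$ with $d_t(p,x)<\lambda\rho'$, a minimizing $g(t)$-geodesic $\gamma$ from $p$ to $x$ must stay inside $B_{g(s)}(p,\rho')$, for otherwise its initial arc up to the first point at $g(s)$-distance $\rho'$ from $p$ would, by $g(t)\ge\lambda^2 g(s)$, have $g(t)$-length $\ge\lambda\rho'$, contradicting $\ell_{g(t)}(\gamma)=d_t(p,x)<\lambda\rho'$; hence $d_s(p,x)\le\ell_{g(s)}(\gamma)\le\lambda^{-1}\ell_{g(t)}(\gamma)<\rho'$. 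Since, moreover, the time interval of $P(p,t,r)$ is contained in $(s-(\rho')^2,s+(\rho')^2)$ once $r\le\sqrt{(\rho')^2-|t-s|}$, we get $P(p,t,r)\subset P(p,s,\rho')$, hence $\mathcal C<(\rho')^{-2}\le r^{-2}$ on $P(p,t,r)$ and so $\rho(p,t)\ge r$, for every $r\le\min\{\lambda\rho',\sqrt{(\rho')^2-|t-s|}\}$. Letting $\rho'\uparrow\rho$ and using $e^{-u}\ge1-u$ and $\sqrt{\rho^2-|t-s|}\ge\rho-|t-s|/\rho$ yields $\rho(p,t)\ge\rho-(n-1)|t-s|/\rho$; finally $|t-s|<\rho^2/(n-1)$ forces $|t-s|/\rho<(n-1)^{-1/2}|t-s|^{1/2}$, so $\rho(p,t)\ge\rho-\sqrt{n-1}\,|t-s|^{1/2}$, which is (b).

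The step needing care is (b): the spatial ball-inclusion and the time-interval inclusion must be run simultaneously, and one must pass to the limit $\rho'\uparrow\rho$ because the defining $\sup$ uses a strict inequality, so the control $\mathcal C<(\rho')^{-2}$ is available only on cylinders of radius strictly below the scale. The point that keeps everything elementary --- that a minimizing $g(t)$-geodesic of $g(t)$-length $<\lambda\rho'$ issuing from $p$ cannot escape the $g(s)$-ball $B_{g(s)}(p,\rho')$ on which the curvature is controlled --- is what lets one avoid appealing to distance-distortion theorems; the only quantitative input is the pointwise metric comparison on a fixed ball, and the final upgrade of a bound linear in $|t-s|$ to one of order $|t-s|^{1/2}$ is forced, by parabolic scaling, through the range $|t-s|\lesssim\rho^2$. (The proof in \cite{buzano2020local} instead invokes the Hamilton--Perelman distance-distortion estimate, whose factor $\tfrac23$ is what produces the explicit constant $C_1=2^4\sqrt{2/3}\sqrt{n-1}$ stated above; the argument sketched here already gives the stronger $\sqrt{n-1}$.)
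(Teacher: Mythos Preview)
The paper does not prove this lemma at all: it is quoted verbatim from \cite[Theorem~2.2, Theorem~4.4]{buzano2020local} and used as a black box. So there is no ``paper's own proof'' to compare against here.

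Your argument is correct and is genuinely different from (and sharper than) the one in \cite{buzano2020local}. There, the time-H\"older part is obtained by invoking the Hamilton--Perelman additive distance-distortion estimate (the one producing the factor $\tfrac{2}{3}$ and hence the stated $C_1=2^4\sqrt{2/3}\sqrt{n-1}$); you instead use only the pointwise metric comparison $g(t)\ge e^{-2(n-1)(\rho')^{-2}|t-s|}g(s)$ on the fixed $g(s)$-ball, together with the elementary geodesic-trapping argument to convert this into the ball inclusion $B_{g(t)}(p,\lambda\rho')\subset B_{g(s)}(p,\rho')$. This is more self-contained, and the resulting constant $\sqrt{n-1}$ is strictly better than $C_1$. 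Your handling of the strict inequality in the $\sup$ via $\rho'<\rho$, and your case split $|t-s|\gtrless\rho^2/(n-1)$ to convert the bound $(n-1)|t-s|/\rho$ into $\sqrt{n-1}\,|t-s|^{1/2}$, are both clean and correct. The spatial Lipschitz part (a) is the standard cylinder-containment argument and matches what \cite{buzano2020local} does.
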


\begin{lemm}[{\cite[Corollary~2.3,~Corollary~4.5]{buzano2020local}}]
\label{prop2}
Let $(M^{n}, g(t))_{t \in [0,T)}$ be a complete Ricci flow defined maximally on $[0,T)$
and let $(p,t) \in M \times [0,T).$
Then we have
\[
4^{-1} r^{-2}_{\mathrm{Rm}}(p,t) \le r^{-2}_{\mathrm{Rm}}(\cdot, \cdot) \le 4 r^{-2}_{\mathrm{Rm}}(p,t)~~\mathrm{on}~P(p,t,a_{1} r_{\mathrm{Rm}}(p,t))
\]
and
\[
4^{-1} r^{-2}_{\mathrm{Ric}}(p,t) \le r^{-2}_{\mathrm{Ric}}(\cdot, \cdot) \le 4 r^{-2}_{\mathrm{Ric}}(p,t)~~\mathrm{on}~P(p,t,a_{1} r_{\mathrm{Ric}}(p,t)).
\]
Here, $a_{1}$ denotes a dimensional constant satisfying $a_{1} \le \frac{1}{2(1+C_{1})}$
where $C_{1}$ is the constant in Lemma \ref{prop1}.
\end{lemm}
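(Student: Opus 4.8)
The plan is to read this off directly from the Lipschitz-type control of Lemma \ref{prop1}: that lemma bounds how much $r_{\mathrm{Rm}}$ (resp. $r_{\mathrm{Ric}}$) can change between two space-time points, and the radius $a_1 r_{\mathrm{Rm}}(p,t)$ of the parabolic cylinder has been calibrated so that over such a cylinder the change is at most half of $r_{\mathrm{Rm}}(p,t)$; halving the scale is exactly what converts the additive estimate of Lemma \ref{prop1} into the multiplicative factor-$4$ estimate on $r_{\mathrm{Rm}}^{-2}$. I would prove the Riemann statement in full and then observe that the Ricci statement is word-for-word the same, since Lemma \ref{prop1} supplies the same constant $C_1$ in both cases.

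First I would dispose of the degenerate case: if the flow is flat (resp. Ricci flat) for all $t$, then $r_{\mathrm{Rm}}\equiv +\infty$ (resp. $r_{\mathrm{Ric}}\equiv +\infty$) and $r_{\mathrm{Rm}}^{-2}\equiv 0$, so the asserted inequality holds trivially; hence I may assume $0<r_{\mathrm{Rm}}(p,t)<+\infty$, positivity being automatic at any $(p,t)$ with $t<T$ because the flow is smooth there and so $|\mathrm{Rm}|$ is bounded on a small parabolic cylinder about $(p,t)$. Then I fix an arbitrary $(q,s)\in P(p,t,a_1 r_{\mathrm{Rm}}(p,t))$. Unwinding Definition \ref{defi2}(1), this means $d_t(p,q)<a_1 r_{\mathrm{Rm}}(p,t)$ and $|s-t|<a_1^2 r_{\mathrm{Rm}}^2(p,t)$, hence also $|s-t|^{1/2}<a_1 r_{\mathrm{Rm}}(p,t)$. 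Feeding these into the Riemann inequality of Lemma \ref{prop1} and using $a_1\le\frac{1}{2(1+C_1)}$,
\[
|r_{\mathrm{Rm}}(p,t)-r_{\mathrm{Rm}}(q,s)|\le \min\{d_t(p,q),d_s(p,q)\}+C_1|s-t|^{1/2}\le d_t(p,q)+C_1|s-t|^{1/2}<a_1(1+C_1)\,r_{\mathrm{Rm}}(p,t)\le \tfrac12\,r_{\mathrm{Rm}}(p,t).
\]
Therefore $\tfrac12 r_{\mathrm{Rm}}(p,t)\le r_{\mathrm{Rm}}(q,s)\le \tfrac32 r_{\mathrm{Rm}}(p,t)$; in particular $r_{\mathrm{Rm}}(q,s)\in(0,+\infty)$, so squaring and inverting gives $\tfrac14 r_{\mathrm{Rm}}^{-2}(p,t)\le \tfrac49 r_{\mathrm{Rm}}^{-2}(p,t)\le r_{\mathrm{Rm}}^{-2}(q,s)\le 4\,r_{\mathrm{Rm}}^{-2}(p,t)$. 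Since $(q,s)$ was an arbitrary point of the cylinder, this is the first displayed inequality; the second follows by repeating the argument verbatim with $r_{\mathrm{Ric}}$ replacing $r_{\mathrm{Rm}}$ and the Ricci inequality of Lemma \ref{prop1} (same $C_1$).

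I do not expect a genuine obstacle here: the proof is a one-step computation once Lemma \ref{prop1} is available. The only points that need (minor) care are (i) isolating the (Ricci) flat case, where the relevant scale is $+\infty$; (ii) being precise that the temporal radius of $P(p,t,r)$ is $r^2$, not $r$, so that the term $C_1|s-t|^{1/2}$ in Lemma \ref{prop1} is controlled by $C_1 a_1 r_{\mathrm{Rm}}(p,t)$, of the same size as the spatial term; and (iii) checking that the prescribed bound $a_1\le\frac{1}{2(1+C_1)}$ on the dimensional constant is exactly what makes $a_1(1+C_1)\le\frac12$. (The computation in fact yields the sharper upper constant $\tfrac94$ in place of $4$; I would still state it with $4$ for uniformity with the Ricci case and the symmetric form of the inequality.)
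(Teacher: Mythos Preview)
Your proposal is correct and is exactly the intended argument: the paper does not supply its own proof of this lemma but merely cites it from \cite{buzano2020local}, where it is obtained in precisely the way you describe, as an immediate corollary of the Lipschitz estimate (Lemma~\ref{prop1}) together with the calibration $a_1(1+C_1)\le\tfrac12$. Your handling of the degenerate (Ricci) flat case and of the temporal radius being $r^2$ is also correct.
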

\begin{rema}
\label{rema-harnack}
On the same time-slice, we can take $a_{1} = 1/2$ in the above lemma.
That is, we have
\[
4^{-1} r^{-2}_{\mathrm{Rm}}(p,t) \le r^{-2}_{\mathrm{Rm}}(\cdot, t) \le 4 r^{-2}_{\mathrm{Rm}}(p,t)~~\mathrm{on}~B(p,t, 1/2 r_{\mathrm{Rm}}(p,t))
\]
and
\[
4^{-1} r^{-2}_{\mathrm{Ric}}(p,t) \le r^{-2}_{\mathrm{Ric}}(\cdot, t) \le 4 r^{-2}_{\mathrm{Ric}}(p,t)~~\mathrm{on}~B(p,t, 1/2 r_{\mathrm{Ric}}(p,t)).
\]
\end{rema}

Next, we will describe some properties of the Ricci flow with uniformly bounded scalar curvature.

\begin{lemm}[{\cite[Theorem~1.1]{bamler2019heat}}~see~also~\cite{simon2015extending}]
\label{prop3}
For any $A \in (0, \infty)$ and $n \in \mathbb{N},$ there exists a constant $C = C(A, n) < \infty$
such that the following holds :
Let $(M^{n}, g(t))_{t \in [0,T)}$ be a $n$-dimensional closed Ricci flow with $\nu[g(0), 1 + A^{-1}] \ge - A,$
where $\nu[g(0), 1 + A^{-1}]$ denotes the $\nu$-invariant (see {\cite[Section~2.1]{bamler2018convergence}}).
Assume that $|R| \le R_{0}$ on $M \times [0,T)$ for some constant $0 \le R_{0} \le A.$
Then for any $0 \le t_{1} \le t_{2} \le T$ and $x, y \in M$ we have the distance bounds
\[
d_{t_{1}} (x,y) - C \sqrt{t_{2} - t_{1}} \le d_{t_{2}} (x,y) \le \exp \left( C R_{0}^{1/2} \sqrt{t_{2} - t_{1}} \right) d_{t_{1}}(x,y) + C \sqrt{t_{2} - t_{1}}.
\]
In paticular, if $\min \{ d_{t_{1}}(x,y),~d_{t_{2}}(x,y) \} \le D$ for some $D < +\infty,$ then
\[
|d_{t_{1}}(x,y) - d_{t_{2}}(x,y)| \le \bar{C} (A, n, D) \sqrt{t_{2} - t_{2}}.
\]
\end{lemm}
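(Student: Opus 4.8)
The estimate is really two inequalities of quite different character: a lower bound $d_{t_2}(x,y) \ge d_{t_1}(x,y) - C\sqrt{t_2-t_1}$, saying distances cannot collapse faster than the parabolic rate, and an upper bound $d_{t_2}(x,y) \le e^{CR_0^{1/2}\sqrt{t_2-t_1}}\,d_{t_1}(x,y) + C\sqrt{t_2-t_1}$, saying distances expand at most at an exponential rate governed by $R_0$. The plan is to derive both from Gaussian bounds for the heat kernel of the flow, which in turn rest on the uniform functional control that the hypotheses provide. First I would record that $\nu[g(0),1+A^{-1}]$ is monotone along the flow, so that together with $|R| \le R_0 \le A$ one gets, for \emph{every} time slice $(M,g(t))$, a uniform logarithmic Sobolev inequality — hence a uniform Sobolev inequality and uniform $\kappa$-noncollapsing on scales $\lesssim A^{-1/2}$ — with constants depending only on $n$ and $A$.

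Next I would pass to Gaussian estimates for the heat kernel $K(x,t;y,s)$, $s<t$, of the forward heat equation $\partial_t = \Delta_{g(t)}$ along the flow (equivalently the conjugate heat kernel run backwards). The uniform log-Sobolev inequality plus Moser iteration gives the on-diagonal bound $K(x,t;y,s) \le C(n,A)(t-s)^{-n/2}$; Grigor'yan's integrated maximum principle, with the metric's time-dependence absorbed using $R \ge -R_0$, upgrades this to a Gaussian upper bound
\[
K(x,t;y,s) \le \frac{C(n,A)}{(t-s)^{n/2}}\exp\!\left(-\frac{d_{t}^{2}(x,y)}{C(t-s)} + C R_0 (t-s)\right)
\]
and, by the symmetric construction, to the same bound with $d_s$ in place of $d_t$. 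The matching \emph{lower} Gaussian bound — obtained from the upper bound, the reproduction formula $K(x,t;y,s)=\int_M K(x,t;z,r)\,K(z,r;y,s)\,dV_r(z)$, the noncollapsed volume lower bound, and a chaining argument along a minimizing geodesic subdivided into $\sim d^2/(t-s)$ steps — is the technically hardest ingredient, and I expect it to be the main obstacle, just as in Bamler--Zhang; note that the classical route to lower heat kernel bounds (Li--Yau) is unavailable here because it needs a Ricci lower bound, which bounded scalar curvature does not supply.

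Finally I would extract the distance distortion. For the lower (non-collapse) bound I would invoke Perelman's change-of-distance lemma: along a $g(t)$-minimizing geodesic from $x$ to $y$ one has, in the barrier sense, $\frac{d^{-}}{dt}d_t(x,y) \ge -c(n)\bigl(Kr_0 + r_0^{-1}\bigr)$ whenever $\mathrm{Ric} \le (n-1)K$ on the $r_0$-balls about the endpoints, and the heat kernel bounds (or the weak $L^p$ curvature bounds they entail) supply the control of $K$ at the scale $r_0 \sim \sqrt{t_2-t}$ needed to turn this into $\frac{d^{-}}{dt}d_t(x,y) \ge -C(t_2-t)^{-1/2}$, which integrates to the claim. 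For the expansion bound I would instead differentiate the $g(t)$-length of a \emph{fixed} $g(t_1)$-minimizing geodesic $\gamma$, use $\frac{d}{dt}L_{g(t)}(\gamma) = -\int_\gamma \mathrm{Ric}(\gamma',\gamma')$, and bound the (possibly large and negative) integrand from below by the same bounded-scalar-curvature machinery; a Gr\"onwall argument then produces the factor $e^{CR_0^{1/2}\sqrt{t_2-t_1}}$ together with the additive $C\sqrt{t_2-t_1}$. The last assertion of the lemma is then immediate: when $\min\{d_{t_1}(x,y),d_{t_2}(x,y)\} \le D$, feeding the two one-sided bounds into one another makes the exponential factor $1+O(\sqrt{t_2-t_1})$ and turns the multiplicative error into an additive one of order $\sqrt{t_2-t_1}$. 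The delicate points throughout are the Gaussian lower bound and, relatedly, converting averaged curvature control into the essentially pointwise control along geodesics that Perelman's lemma requires; in dimension four one may instead follow Simon's more direct arguments in \cite{simon2015extending}.
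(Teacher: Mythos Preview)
The paper does not prove this lemma at all: it is quoted verbatim as Theorem~1.1 of Bamler--Zhang \cite{bamler2019heat} (with the four-dimensional case also referred to Simon \cite{simon2015extending}), and no argument is given in the paper itself. So there is nothing in the paper to compare your proposal against.

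That said, your outline is broadly the Bamler--Zhang strategy --- uniform log-Sobolev/noncollapsing from the $\nu$-functional, Gaussian heat-kernel bounds, and then distance distortion via Perelman's lemma and a Gr\"onwall-type argument --- and is a reasonable high-level summary of how the cited result is actually proved. The one place where your sketch is vague in a way that matters is exactly where you flag it: converting the heat-kernel/averaged curvature information into the \emph{pointwise} Ricci control on balls of radius $\sim\sqrt{t_2-t}$ that Perelman's lemma needs is not automatic, and in \cite{bamler2017heat,bamler2019heat} this goes through an intermediate integral curvature bound and a covering/pigeonhole step rather than a direct application of the Gaussian estimates. If you were actually writing out the proof rather than citing it, that step would need to be made precise.
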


\begin{lemm}[{\cite[Theorem~1.1]{zhang2011bounds}}~and~\cite{bamler2017heat}~or~\cite{simon2015extending}]
\label{prop4}
Let $(M^{n}, g(t))_{t \in [0,T)}$ be a $n$-dimensional closed Ricci flow with $|R| \le R_{0} < +\infty$ on $M \times [0,T).$
Then there exist constants $0 < \kappa_{1} = \kappa_{1} (n, \nu[g(0), 2T])  \le \kappa_{2} = \kappa_{2} (n, \nu[g(0), 2T]) < +\infty$ such that 
\[
\kappa_{1} r^{n} \le \mathrm{Vol}_{g(t)} (B(x,t,r)) \le \kappa_{2} r^{n}
\]
for all $0 < r < \sqrt{t}, t \in (0,T).$
\end{lemm}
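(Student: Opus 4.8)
The plan is to treat the two inequalities separately: the lower bound is a non-collapsing statement and the upper bound a non-inflating one, but both rest on Perelman's monotone quantities together with the scalar bound $|R|\le R_0$. As a common first step I would record that, since the $\nu$-entropy is monotone non-decreasing along the flow, $\nu[g(t),\tau]\ge\nu[g(0),t+\tau]\ge\nu[g(0),2T]=:\underline\nu$ for every $t\in(0,T)$ and $\tau\in(0,T]$ (with the usual convention that $\nu[g(0),2T]$ is the infimum over scales $\le 2T$); this is the only place the hypothesis on $g(0)$ is used, and it is what makes the constants depend on $n$ and $\underline\nu$.

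For the lower bound I would fix $(x,t)$ with $r^2<t$, set $\tau:=r^2$, and test $\mathcal{W}(g(t),\cdot,\tau)$ against a function $f$ with $u:=(4\pi\tau)^{-n/2}e^{-f}$ a smoothing of $\mathrm{Vol}_{g(t)}(B(x,t,r))^{-1}\mathbf 1_{B(x,t,r)}$, renormalised so $\int_M u\,dV_{g(t)}=1$. In $\mathcal{W}$ the curvature enters only through $\int\tau R\,u\,dV_{g(t)}\le r^2R_0\le TR_0$; the gradient term is $\le C(n)$ because $|\nabla f|\le C/r$ on the transition annulus while $\tau=r^2$; and $\int(f-n)u\,dV_{g(t)}$ evaluates to $\log\big(\mathrm{Vol}_{g(t)}(B(x,t,r))r^{-n}\big)+C(n)$. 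Hence $\underline\nu\le\mathcal{W}(g(t),f,\tau)\le\log\big(\mathrm{Vol}_{g(t)}(B(x,t,r))r^{-n}\big)+C(n)+C(n)R_0T$, which rearranges to the desired $\mathrm{Vol}_{g(t)}(B(x,t,r))\ge\kappa_1 r^n$.

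For the upper bound I would use Perelman's reduced volume $\widetilde V_{(x,t)}(\sigma)$ based at $(x,t)$ in backward time $\sigma$: it is non-increasing in $\sigma$ with $\widetilde V_{(x,t)}(0^+)=1$, so $\widetilde V_{(x,t)}(r^2)\le 1$. Writing $\widetilde V_{(x,t)}(r^2)=\int_M(4\pi r^2)^{-n/2}e^{-\ell(\cdot,r^2)}\,dV_{g(t-r^2)}$ with $\ell$ the reduced distance, I would use $|R|\le R_0$ (and the elementary $R\ge -n/(2s)$) to bound $\ell(y,r^2)\le\Lambda(n,R_0T)$ for all $y$ in a ball $B_{g(t-r^2)}(x,c(n)r)$, by estimating $\int_0^{r^2}\sqrt\sigma\,(R+|\dot\gamma|^2)\,d\sigma$ along a short competitor curve $\gamma$ from $x$ to $y$. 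This yields $1\ge\widetilde V_{(x,t)}(r^2)\ge(4\pi r^2)^{-n/2}e^{-\Lambda}\,\mathrm{Vol}_{g(t-r^2)}\big(B_{g(t-r^2)}(x,c(n)r)\big)$, i.e. a volume upper bound at time $t-r^2$. To return to time $t$ I would invoke the distance-distortion estimate of Lemma \ref{prop3} (so that $B_{g(t)}(x,r)\subseteq B_{g(t-r^2)}(x,C'r)$, after which the reduced-volume bound is applied with radius $C'r$) together with $\partial_t\,dV_{g(t)}=-R\,dV_{g(t)}$ and $|R|\le R_0$ (so that $\mathrm{Vol}_{g(t)}(\Omega)\le e^{R_0T}\mathrm{Vol}_{g(t-r^2)}(\Omega)$); combining these gives $\mathrm{Vol}_{g(t)}(B(x,t,r))\le\kappa_2 r^n$.

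The main obstacle is the upper bound, and specifically the estimate $\ell(\cdot,r^2)\le\Lambda$ on an entire ball rather than at a single point: one must choose, for each $y$, a competitor $\mathcal{L}$-curve from $(x,t)$ to $(y,t-r^2)$ and control its $\mathcal{L}$-length uniformly, which is exactly where the scalar curvature bound is spent (and is the source of the $R_0$-dependence suppressed in the statement, harmless since $R_0\le A$ is a standing hypothesis in the applications). An alternative that avoids $\mathcal{L}$-geometry would be to quote the Gaussian lower bound for the conjugate heat kernel $K(\cdot,t-r^2;x,t)$ valid under bounded scalar curvature from \cite{bamler2017heat} and \cite{bamler2019heat}, integrate the identity $\int_M K(\cdot,t-r^2;x,t)\,dV_{g(t-r^2)}=1$ against that lower bound over $B_{g(t-r^2)}(x,r)$ to get the time-$(t-r^2)$ volume bound, and then transfer time-slices exactly as above.
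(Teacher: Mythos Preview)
The paper does not prove this lemma; it is quoted from the cited references. Your outline is sound and matches those references. The lower bound is the standard Perelman $\nu$-entropy non-collapsing computation, exactly as you write, and your observation that the constants actually carry an $R_0T$-dependence suppressed in the stated form is correct.

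For the upper bound, your \emph{alternative} via the Gaussian lower bound on the conjugate heat kernel is precisely the Bamler--Zhang route and works as stated. Your \emph{primary} route through the reduced volume, however, has a step you have not closed: to bound $\int_0^{r^2}\sqrt\sigma\,|\dot\gamma|^2_{g(t-\sigma)}\,d\sigma$ for a competitor curve joining $x$ to $y$, you must compare $|\dot\gamma|_{g(t-\sigma)}$ across different times $\sigma$, and a scalar-curvature bound alone gives volume-form comparison but not metric comparison, so this speed term is not controlled by $|R|\le R_0$ in the way you suggest. Zhang's original argument in \cite{zhang2011bounds} does not bound $\ell$ on a full ball by a direct competitor; rather, he bounds $\ell$ at the single point $x$ via the constant curve, converts this into a conjugate-heat-kernel lower bound at $x$ by Perelman's differential Harnack, and then propagates that lower bound over a ball using a gradient estimate for the heat kernel valid under bounded scalar curvature. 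In other words, your ``alternative'' is in fact the route taken in both cited references, and your primary route would need an additional ingredient (e.g.\ Perelman's a priori gradient estimate for $\ell$) to go through.
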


\begin{lemm}[\cite{bamler2017heat}~or~\cite{simon2015extending}]
\label{prop5}
Let $(M^{n}, g(t))_{t \in [0,T)}$ be a $n$-dimensional closed Ricci flow with $|R| \le R_{0} < +\infty$ on $M \times [0,T).$
Then there exists a positive constant $1 < V = V(M, g(0), R_{0}) < +\infty$ such that 
\[
0 < V^{-1} \le \mathrm{Vol} (M, g(t)) \le V < +\infty.
\]
\end{lemm}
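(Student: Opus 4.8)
The plan is to bound the total volume directly through its evolution equation along the Ricci flow, together with an elementary Grönwall estimate. First I would recall the first variation of the Riemannian volume form under $\partial_t g(t) = -2\,\mathrm{Ric}_{g(t)}$, namely $\partial_t\big(dV_{g(t)}\big) = -R_{g(t)}\, dV_{g(t)}$, so that on the closed manifold $M$ one has
\[
\frac{d}{dt}\,\mathrm{Vol}(M, g(t)) = -\int_M R_{g(t)}\, dV_{g(t)}.
\]
Invoking the hypothesis $|R_{g(t)}| \le R_0$ on $M \times [0,T)$, the right-hand side is controlled by $\big|\!\int_M R_{g(t)}\, dV_{g(t)}\big| \le R_0\,\mathrm{Vol}(M, g(t))$, so writing $v(t) := \mathrm{Vol}(M, g(t)) > 0$ we obtain the differential inequality
\[
\Big|\,\tfrac{d}{dt}\log v(t)\,\Big| \le R_0, \qquad t \in [0,T).
\]

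Next I would integrate this inequality on $[0,t]$ for arbitrary $t < T$; since $T < +\infty$ this yields $e^{-R_0 T}\, v(0) \le v(t) \le e^{R_0 T}\, v(0)$ for all $t \in [0,T)$. Finally, setting
\[
V := \max\{\, 2,\ e^{R_0 T}\,\mathrm{Vol}(M, g(0)),\ e^{R_0 T}\,\mathrm{Vol}(M, g(0))^{-1}\,\}
\]
produces a constant $1 < V < +\infty$, depending only on $M$, $g(0)$, $R_0$ (and the fixed finite value $T$), with $V^{-1} \le \mathrm{Vol}(M, g(t)) \le V$ for every $t \in [0,T)$, which is the assertion.

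There is essentially no obstacle here: the only inputs are the standard first-variation formula for $dV_{g(t)}$ and a one-line Grönwall argument, and the closedness of $M$ (so that $v(t)$ is finite and smooth in $t$) is part of the hypotheses. The only subtle point is that this elementary estimate makes $V$ depend on $T$; if one wishes the dependence to be on $(M, g(0), R_0)$ alone, the lower bound may instead be extracted from the uniform local non-collapsing of Lemma \ref{prop4} (for $t$ bounded away from $0$, covering $M$ by a single ball of fixed radius $r < \sqrt{t}$ and applying the volume-ratio lower bound, whose constant $\kappa_1$ can be taken to depend only on $n$ and $\nu[g(0)] \le \nu[g(0), 2T]$, while for small $t$ the bound $v(t) \ge v(0)e^{-R_0 t}$ already suffices). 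This is the route of \cite{bamler2017heat} and \cite{simon2015extending}, but the refinement is not needed elsewhere in this paper.
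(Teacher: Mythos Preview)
Your argument is correct and is exactly the standard route: the evolution $\partial_t\,dV_{g(t)}=-R\,dV_{g(t)}$ together with the scalar curvature bound and Gr\"onwall gives the two-sided volume bound immediately. The paper itself does not supply a proof of this lemma; it merely quotes it from \cite{bamler2017heat} and \cite{simon2015extending}, where the same elementary computation is used. Your closing remark about the $T$-dependence of $V$ is well observed but, as you say, immaterial for the purposes of this paper.
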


By Lemma \ref{prop4} and \ref{prop5}, we obtain the following diameter bounds :

\begin{lemm}[\cite{bamler2017heat}~or~\cite{simon2015extending}]
\label{prop6}
Let $(M^{n}, g(t))_{t \in [0,T)}$ be a $n$-dimensional closed Ricci flow with $|R| \le R_{0} < +\infty$ on $M \times [0,T).$
Then there exists a positive constant $1 < d = d(M, g(0), R_{0}, T) < +\infty$ such that 
\[
0 < d^{-1} \le \mathrm{diam} (M, g(t)) \le d < +\infty.
\]
\end{lemm}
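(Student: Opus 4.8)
The plan is to fix once and for all a time $t_{0} \in (0,T)$, split the interval $[0,T)$ into $[0,t_{0}]$ and $[t_{0},T)$, and treat the two pieces by different means: on the short-time part the flow is a smooth family of metrics on a compact space-time, while on the long-time part Lemma \ref{prop4} and Lemma \ref{prop5} supply uniform control all the way up to $T$.

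On $[0,t_{0}]$ the Ricci flow is smooth on the compact set $M \times [0,t_{0}]$, so there is a constant $\Lambda = \Lambda(M,g(0),R_{0},t_{0}) \ge 1$ with $\Lambda^{-1} g(0) \le g(t) \le \Lambda g(0)$ for all $t \in [0,t_{0}]$. Hence $\Lambda^{-1/2}\,\mathrm{diam}(M,g(0)) \le \mathrm{diam}(M,g(t)) \le \Lambda^{1/2}\,\mathrm{diam}(M,g(0))$ on this sub-interval, and since $\mathrm{diam}(M,g(0))$ is a positive finite number, this already gives two-sided bounds there.

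For the long-time part I would argue as follows. Set $r_{0} := \tfrac12\sqrt{t_{0}}$ and fix $t \in [t_{0},T)$; then $r_{0} < \sqrt{t_{0}} \le \sqrt{t}$, so Lemma \ref{prop4} applies to $g(t)$-balls of radius $r_{0}$. For the \emph{upper} bound, let $\gamma\colon[0,D]\to M$ be a unit-speed minimizing $g(t)$-geodesic realizing $D := \mathrm{diam}(M,g(t))$, and put $p_{j} := \gamma(2 r_{0} j)$ for $j = 0,1,\dots,N := \lfloor D/(2 r_{0})\rfloor$; minimality of $\gamma$ forces $d_{t}(p_{j},p_{k}) = 2 r_{0}|j-k|$, so the balls $B(p_{j},t,r_{0})$ are pairwise disjoint. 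Then $(N+1)\kappa_{1} r_{0}^{n} \le \mathrm{Vol}_{g(t)}(M) \le V$ by Lemmas \ref{prop4} and \ref{prop5}, whence $D \le 2 r_{0}(N+1) \le 2 V \kappa_{1}^{-1} r_{0}^{-(n-1)}$. For the \emph{lower} bound, if $D \ge \sqrt{t_{0}}$ there is nothing to do; otherwise $D < \sqrt{t_{0}} \le \sqrt{t}$, so for any $x \in M$ the whole manifold $M = B(x,t,D)$ is a metric ball of admissible radius and Lemma \ref{prop4} gives $V^{-1} \le \mathrm{Vol}_{g(t)}(M) = \mathrm{Vol}_{g(t)}(B(x,t,D)) \le \kappa_{2} D^{n}$, i.e. $D \ge (\kappa_{2}V)^{-1/n}$.

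Combining the four estimates and letting $d$ be the maximum of the two upper bounds together with the reciprocals of the two lower bounds yields the lemma; all constants involved ($\kappa_{1},\kappa_{2}$ from $n$ and $\nu[g(0),2T]$; $V$ from $M,g(0),R_{0}$; and $\Lambda$, $\mathrm{diam}(M,g(0))$ from $M,g(0),R_{0}$ once $t_{0}$ is chosen) depend only on $M,g(0),R_{0},T$, as required. There is no serious obstacle here; the only point that needs care is that the scale restriction $r < \sqrt{t}$ in Lemma \ref{prop4} degenerates as $t \to 0$, which is exactly why the short-time slab $[0,t_{0}]$ is peeled off and handled by smooth compactness rather than by volume comparison.
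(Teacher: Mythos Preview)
Your argument is correct and matches the paper's approach: the paper gives no detailed proof of this lemma, merely stating that it follows from Lemmas~\ref{prop4} and~\ref{prop5}, which is exactly the combination you use (ball-volume lower bound plus total-volume upper bound for the diameter upper bound, and the reverse pairing for the lower bound). Your splitting off of the short-time slab $[0,t_{0}]$ to accommodate the restriction $r<\sqrt{t}$ in Lemma~\ref{prop4} is the natural way to make the argument rigorous and is implicit in the paper's citation of the result.
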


\noindent
By Lemma \ref{prop6}, we can take the constant $\bar{C}$ in Lemma \ref{prop3} uniformly.

Next, we state the first key fact proved by Bamler and Zhang \cite{bamler2017heat} to prove Main Theorem.

\begin{lemm}[{\cite[Lemma~6.1]{bamler2017heat}}]
\label{prop7}
For $m = 0,1, \cdots,$ there exist $A_{0}, A_{1}, \cdots < +\infty$ such that the following holds :
Let $(M^{n}, g(t))_{t \in [0,T)}$ be a closed Ricci flow and let $x_{0} \in M,$ $0 < r^{2}_{0} < \min \{ 1, t_{0} \}.$
Assume that $B(x_{0}, t_{0}, r_{0})$ is relative compact,
$|R| \le n$ and $|\mathrm{Rm}| \le r^{-2}_{0}$ on $P^{-}(x_{0}, t_{0}, r_{0}, -r^{2}_{0}),$
where $P^{-}(x_{0}, t_{0}, r_{0}, -r^{2}_{0})$ denotes the backward parabolic cylinder centered at $(x_{0}, t_{0})$ of radius $r_{0}$ defined by
\[
P^{-}(x_{0}, t_{0}, r_{0}, -r^{2}_{0}) := B(x_{0}, t_{0}, r_{0}) \times [t_{0} - r^{2}_{0}, t_{0}].
\]
Then
\[
\begin{split}
&|\nabla^{m} \mathrm{Ric}|(x_{0}, t_{0}) \le A_{m} r^{-1-m}_{0}~\mathrm{for~all}~m \ge 0, \\
&|\partial_{t} \mathrm{Rm}|(x_{0}, t_{0}) \le A_{0} r^{-3}_{0}.
\end{split}
\]
\end{lemm}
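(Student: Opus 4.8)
The plan is to rescale parabolically to a fixed scale, invoke Shi's estimates for the local geometry, and then harvest a gain of one power of $r_0$ from the observation that, after rescaling, the hypothesis $|R|\le n$ becomes a \emph{small} scalar-curvature bound. Set $\bar g(s):=r_0^{-2}g(t_0+r_0^2s)$ for $s\in[-1,0]$, again a Ricci flow; on $\bar P^-:=\bar B(x_0,0,1)\times[-1,0]$ one has $|\mathrm{Rm}_{\bar g}|\le 1$ and $|R_{\bar g}|\le nr_0^2=:\varepsilon$, and we may assume $\varepsilon\le1$ (otherwise $r_0^{-2}\le n$ and all asserted bounds are immediate from $|\mathrm{Rm}|\le r_0^{-2}$, with $A_m$ chosen in terms of $n,m$). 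Undoing the rescaling multiplies $|\nabla^m\mathrm{Ric}_{\bar g}|(x_0,0)$ by $r_0^{-2-m}$ and $|\partial_t\mathrm{Rm}_{\bar g}|(x_0,0)$ by $r_0^{-4}$, so, since $\sqrt\varepsilon=\sqrt n\,r_0$, it suffices to prove $|\nabla^m\mathrm{Ric}_{\bar g}|(x_0,0)\le A_m\sqrt\varepsilon$ for all $m$ and $|\partial_t\mathrm{Rm}_{\bar g}|(x_0,0)\le A_0\sqrt\varepsilon$. Throughout, $(x_0,0)$ is the \emph{final} time-slice of the backward cylinder $\bar P^-$, which is precisely the configuration for which parabolic interior estimates yield pointwise control.

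First I would record the geometric input: Shi's local higher-order estimates on $\bar P^-$ give $|\nabla^k\mathrm{Rm}_{\bar g}|\le C_k(n)$ on $\bar B(x_0,0,\tfrac12)\times[-\tfrac14,0]$ for all $k$, and, combined with a local non-collapsing bound at unit scale for $\bar g$, this makes every time-slice uniformly $C^\infty$-controlled (bounded harmonic radius, and uniform constants in the Neumann--Poincar\'e and parabolic mean-value inequalities, all depending only on $n$). The heart of the argument is the case $m=0$: rewriting $\partial_t R_{\bar g}=\Delta R_{\bar g}+2|\mathrm{Ric}_{\bar g}|^2$ as $2|\mathrm{Ric}_{\bar g}|^2=\partial_tR_{\bar g}-\Delta R_{\bar g}$, multiplying by $\phi^2$ for a fixed spatial cutoff $\phi$ with $\phi\equiv1$ on $\bar B(x_0,0,\tfrac12)$, $\mathrm{supp}\,\phi\subset\bar B(x_0,0,1)$ and $|\nabla\phi|,|\nabla^2\phi|\le C(n)$, and integrating over $\bar B(x_0,0,1)\times[-\tfrac14,0]$, integration by parts in space (for the $\Delta$-term) and in time (using $\partial_t\,dV=-R\,dV$) reduces every resulting term to $\|R_{\bar g}\|_\infty\le\varepsilon$ times a dimensional volume factor (the volume of $\bar B(x_0,0,1)$ is $\le C(n)$ by Bishop--Gromov, since $\mathrm{Ric}_{\bar g}\ge-(n-1)$), whence $\int_{-1/4}^{0}\!\int_{\bar B(x_0,0,1/2)}|\mathrm{Ric}_{\bar g}|^2\,dV\,dt\le C(n)\varepsilon$. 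Since $u:=|\mathrm{Ric}_{\bar g}|^2$ is a nonnegative subsolution of $\partial_tu\le\Delta u+C(n)u$ (Bochner formula for $|\mathrm{Ric}|^2$ under Ricci flow, using $|\mathrm{Rm}_{\bar g}|\le1$), the parabolic mean-value inequality --- together with the volume lower bound from the non-collapsing --- upgrades this $L^2$ estimate to $\sup_{\bar B(x_0,0,1/4)\times[-1/16,0]}|\mathrm{Ric}_{\bar g}|^2\le C(n)\varepsilon$, i.e.\ $|\mathrm{Ric}_{\bar g}|\le C(n)\sqrt\varepsilon$ on a definite backward cylinder about $(x_0,0)$.

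For $m\ge1$ I would bootstrap: the Ricci tensor evolves by the Lichnerowicz heat equation $\partial_t\mathrm{Ric}_{\bar g}=\Delta_L\mathrm{Ric}_{\bar g}$, a linear parabolic system whose coefficients are the curvatures bounded above, so interior parabolic Schauder estimates for this system on a slightly smaller cylinder, with zeroth-order datum of size $\sqrt\varepsilon$, give $|\nabla^m\mathrm{Ric}_{\bar g}|(x_0,0)\le C_m(n)\sqrt\varepsilon$ for all $m$. Finally, for $\partial_t\mathrm{Rm}$ I would use the pointwise variation identity $\partial_tR^{i}{}_{jkl}=\nabla_k(\partial_t\Gamma^{i}_{jl})-\nabla_l(\partial_t\Gamma^{i}_{jk})$ with $\partial_t\Gamma\sim\nabla\mathrm{Ric}$: this shows $|\partial_t\mathrm{Rm}_{\bar g}|\lesssim|\nabla^2\mathrm{Ric}_{\bar g}|$ pointwise (equivalently, the $\mathrm{Rm}\!*\!\mathrm{Rm}$ terms in Hamilton's evolution of $R_{ijkl}$ cancel, leaving only $\nabla^2\mathrm{Ric}$ and a $\mathrm{Ric}\!*\!\mathrm{Rm}$ term), and the $m=2$ estimate then yields $|\partial_t\mathrm{Rm}_{\bar g}|(x_0,0)\le C(n)\sqrt\varepsilon$. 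Undoing the rescaling produces the stated bounds $A_m r_0^{-1-m}$ and $A_0r_0^{-3}$.

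The main obstacle is the local non-collapsing at scale $r_0$ invoked above: without a lower bound on the volume ratio of balls of radius $\sim r_0$, both the $L^2$-to-$L^\infty$ passage (Moser iteration) and the lower bound on the harmonic radius fail, and this is exactly the point at which "$|R|$ bounded" does work that "$|\mathrm{Rm}|$ bounded" cannot --- a full-curvature bound on a parabolic cylinder does not by itself prevent collapse. Everything else is a routine, if somewhat lengthy, combination of Shi's estimates, one integration-by-parts identity, Moser iteration and parabolic Schauder theory; the remaining care is bookkeeping, to confirm that no constant secretly depends on the global geometry of $(M,g(0))$ beyond what the hypotheses (and the ambient non-collapsing) allow.
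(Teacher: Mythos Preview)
The paper does not prove this lemma; it is quoted verbatim from Bamler--Zhang \cite{bamler2017heat} and used as a black box, so there is no ``paper's own proof'' to compare against. Your outline --- parabolic rescaling, the identity $2|\mathrm{Ric}|^{2}=\partial_{t}R-\Delta R$ integrated against a cutoff to get a space--time $L^{2}$ bound of order $\varepsilon=nr_{0}^{2}$, Moser iteration to pass to $L^{\infty}$, and then linear parabolic estimates for the Lichnerowicz heat equation to propagate to $\nabla^{m}\mathrm{Ric}$ and $\partial_{t}\mathrm{Rm}$ --- is indeed the Bamler--Zhang strategy, and the computation of the scaling exponents is correct.

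There is, however, a genuine gap in your treatment of the volume lower bound. The constants $A_{m}$ are asserted \emph{before} the Ricci flow is introduced, so they must depend only on $n$ and $m$; no Perelman-type $\kappa$-noncollapsing constant (which would depend on $(M,g(0))$ and $T$) is admissible. Your final paragraph correctly identifies this as the obstacle, but the suggested resolution --- that the global bound $|R|\le n$ ``does work that $|\mathrm{Rm}|$ bounded cannot'' in preventing collapse --- is not right: a scalar curvature bound alone does not yield a scale-$r_{0}$ volume lower bound with a universal constant, and in any case the hypothesis $|\mathrm{Rm}|\le r_{0}^{-2}$ already implies $|R|\le C(n)r_{0}^{-2}$, so at the scale in question the scalar bound adds nothing to noncollapsing. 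The correct fix (and this is what Bamler--Zhang do) is to pull the rescaled flow back to $B(0,1)\subset T_{x_{0}}M$ via the $\bar g(0)$-exponential map: since $|\mathrm{Rm}_{\bar g}|\le 1$, the conjugate radius is at least $\pi$, so $\exp_{x_{0}}$ is a local diffeomorphism there, and on the pullback one has injectivity radius $\ge 1$ at the origin and hence a \emph{dimensional} volume lower bound. All of your analytic steps --- Shi, the $L^{2}$ estimate, Moser, Schauder --- then go through on this local cover with constants depending only on $n$, and the pointwise conclusions at $(x_{0},0)$ descend. With this modification your argument is complete.
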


\begin{rema}
\label{rema1}
By the variation of the Ricci curvature under the Ricci flow :
\[
\tilde{\nabla}_{\partial_{t}} \mathrm{Ric} (\cdot, \bullet) = \Delta \mathrm{Ric}(\cdot, \bullet)
+ \sum_{i,j} 2 \langle R(\cdot, e_{i})e_{j}, \bullet \rangle \mathrm{Ric}(e_{i}, e_{j})
\]
we can also obtain
\[
|\partial_{t} \mathrm{Ric}| (x_{0}, t_{0}) \le A_{0} r^{-3}_{0}.
\]
Here, $(e_{i})$ denotes an orthonomal frame at a point in $M$ and
$\tilde{\nabla}$ denotes the connection on $\pi^{*} TM,$ where 
$\pi : M \times [0,T) \rightarrow M$ is the natural projection
defined as follows :
For spatial vector fields $X, Y,$ we define $\tilde{\nabla} :=$ the Levi-Civita connection of $g(t).$
For spatial vector field $X,$ we define 
\[
\tilde{\nabla}_{\partial_{t}} X := \frac{d}{dt} X - \mathrm{Ric} (X).
\]
\end{rema}

The second key lemma to prove Main Theorem is the following one.
\begin{lemm}
\label{prop8}
Let $(M^{n}, g(t))_{t \in [0,T)}$ be a $n$-dimensional closed Ricci flow with $|R| \le R_{0} < +\infty$ on $M \times [0,T).$
Then the assertion for $r_{\mathrm{Rm}}$ in Lemma \ref{sing2} $(1)$ is holds when 
we replace $\limsup$ with $\liminf$,
that is,
\[
p \in \Sigma~\mathit{if~and~only~if}~\liminf_{t \nearrow T} r^{-2}_{\mathrm{Rm}} (p,t) = \infty.
\]
\end{lemm}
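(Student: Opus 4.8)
The statement to prove is that for a closed Ricci flow with $|R| \le R_0 < +\infty$, a point $p$ lies in the singular set $\Sigma$ if and only if $\liminf_{t \nearrow T} r_{\mathrm{Rm}}^{-2}(p,t) = +\infty$. One direction is immediate: if $\liminf_{t \nearrow T} r_{\mathrm{Rm}}^{-2}(p,t) = +\infty$, then in particular $\limsup_{t \nearrow T} r_{\mathrm{Rm}}^{-2}(p,t) = +\infty$, so $p \in \Sigma$ by Lemma~\ref{sing2}(1). The content is in the converse: assuming $p \in \Sigma$, we must upgrade the $\limsup$ statement (which gives a \emph{sequence} $t_i \nearrow T$ with $r_{\mathrm{Rm}}^{-2}(p,t_i) \to \infty$) to a $\liminf$ statement (every sequence works). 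Equivalently, we must rule out that $r_{\mathrm{Rm}}^{-2}(p,t)$ stays bounded along some sequence $s_j \nearrow T$.

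\textbf{Key steps.} First I would argue by contradiction: suppose $p \in \Sigma$ but $\liminf_{t \nearrow T} r_{\mathrm{Rm}}^{-2}(p,t) = L < +\infty$, so there is a sequence $s_j \nearrow T$ with $r_{\mathrm{Rm}}^{-2}(p, s_j) \le L+1$, i.e. $r_{\mathrm{Rm}}(p, s_j) \ge (L+1)^{-1/2} =: \rho > 0$ uniformly in $j$. By definition of the Riemann scale this means $|\mathrm{Rm}| < \rho^{-2}$ on the parabolic cylinder $P(p, s_j, \rho')$ for any fixed $\rho' < \rho$; in particular, on the backward part $B_{g(s_j)}(p, \rho') \times [\max\{0, s_j - (\rho')^2\}, s_j]$ the curvature is bounded by $\rho^{-2}$. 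Since $s_j \nearrow T$, for $j$ large $s_j - (\rho')^2$ can be taken to be a \emph{fixed} time $t_\ast < T$ independent of $j$ (shrinking $\rho'$ if needed so that $(\rho')^2 < T - t_\ast$), and the balls $B_{g(s_j)}(p, \rho')$ cover a definite region. The strategy is then to combine this uniform curvature bound near $(p, s_j)$ with Lemma~\ref{prop7} (Bamler--Zhang local derivative estimates) — after the parabolic rescaling that normalizes $|R| \le n$, which is permitted since $|R|$ is globally bounded — to obtain bounds on all derivatives of $\mathrm{Rm}$ and on $\partial_t \mathrm{Rm}$ on a slightly smaller cylinder, uniformly in $j$. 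A Shi-type/interior-estimate plus Hamilton-compactness argument, together with the distance distortion control of Lemma~\ref{prop3} and the volume non-collapsing of Lemma~\ref{prop4}, then shows that the curvature near $p$ cannot blow up as $t \nearrow T$ at all — contradicting $p \in \Sigma$ via Lemma~\ref{sing2}(1). Concretely, one wants to show that the bound $|\mathrm{Rm}|(p,s_j) \le \rho^{-2}$ on these backward cylinders forces $|\mathrm{Rm}|$ to remain bounded on a whole fixed neighborhood $U$ of $p$ for all $t$ close to $T$, so $p \notin \Sigma$.

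\textbf{The main obstacle.} The delicate point is the interpolation in \emph{time}: knowing $r_{\mathrm{Rm}}(p,s_j) \ge \rho$ only controls curvature on backward parabolic cylinders ending at the times $s_j$, but between consecutive $s_j$'s (and especially as $t \to T$ along other sequences) we have no a priori control. The mechanism that closes this gap must be a pseudolocality-type or forward-in-time propagation statement: a curvature bound at the initial time $s_j$ of a parabolic region, together with the scalar curvature bound, must be shown to persist for a definite amount of time forward. Here is where the hypothesis $|R| \le R_0$ is essential (ordinary Shi estimates only propagate backward). One would invoke the heat-kernel / pseudolocality results of Bamler--Zhang underlying Lemmas~\ref{prop3}--\ref{prop7}: the scalar curvature bound plus local boundedness of $|\mathrm{Rm}|$ on one time slice gives, via the distance distortion estimate, that the same spatial neighborhood of $p$ stays "the same" metrically, and then a local smoothing estimate propagates the Riemann bound forward a definite time. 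Iterating (or taking $s_j$ sufficiently dense near $T$, which is automatic since $s_j \nearrow T$) then covers all of $[t_\ast, T)$ near $p$, giving $|\mathrm{Rm}|$ bounded near $p$ up to time $T$, hence $p \in \mathcal{R}$, the desired contradiction. I expect the bookkeeping of how the various constants ($\rho$, the rescaling factor, the radii in Lemma~\ref{prop7}, the distortion constant $\bar C$ from Lemma~\ref{prop3}) fit together to be the technically fussy part, but no single step should be conceptually hard once the forward-propagation input is identified.
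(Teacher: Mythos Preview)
Your contradiction strategy is correct and is in fact more direct than the paper's own route. The paper argues via the \emph{time-slice} Riemann scale $\tilde r_{\mathrm{Rm}}$: it invokes the distance distortion estimate (Lemma~\ref{prop3}) together with Bamler--Zhang's backward pseudolocality theorem to show that $p\in\Sigma \Longleftrightarrow \limsup_{t\nearrow T}\tilde r_{\mathrm{Rm}}(p,t)=0$, and then deduces the statement for $r_{\mathrm{Rm}}$ from the trivial inequality $\tilde r_{\mathrm{Rm}}\ge r_{\mathrm{Rm}}$. Backward pseudolocality is needed there precisely to upgrade a time-slice curvature bound at time $s_j$ to a bound on a backward parabolic region. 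Because you work with the \emph{parabolic} scale $r_{\mathrm{Rm}}$ from the outset, that backward time direction is already built in: $r_{\mathrm{Rm}}(p,s_j)\ge\rho$ immediately gives $|\mathrm{Rm}|<(\rho')^{-2}$ on $B_{g(s_j)}(p,\rho')\times(s_j-(\rho')^2,\,s_j]$ for $\rho'<\rho$, and since $s_j\nearrow T$ these backward cylinders cover all of $[T-(\rho')^2/2,\,T)$. Combined with Lemma~\ref{prop3} (pick $t_\ast<T$ with $C\sqrt{T-t_\ast}<\rho'/2$ and $\eta$ small enough that $B_{g(t_\ast)}(p,\eta)\subset B_{g(s_j)}(p,\rho')$ for every large $j$), this already produces a fixed neighbourhood of $p$ on which $|\mathrm{Rm}|$ stays bounded up to time $T$, contradicting $p\in\Sigma$.

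So your ``main obstacle'' is a red herring: no forward-in-time propagation is required, nor are the derivative estimates of Lemma~\ref{prop7}, nor any Hamilton-compactness argument. The only external inputs needed are the definition of $r_{\mathrm{Rm}}$ and the distance distortion of Lemma~\ref{prop3}. What you gain over the paper's proof is that the backward-pseudolocality step is absorbed for free by starting from the parabolic scale rather than the time-slice scale; what the paper gains is brevity, since it can simply cite \cite{bamler2017heat} for that step. Both arguments ultimately hinge on the same two facts: bounded scalar curvature controls distance distortion, and a curvature bound on one late time-slice propagates backward for a definite amount of time.
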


\begin{proof}
From the distance distortion estimate (Lemma \ref{prop3}) and the backward pseudolocality theorem by Bamler and Zhang{\cite[Theorem~1.5]{bamler2017heat}} 
(or from Lemma \ref{prop3}, Lemma \ref{prop4} and {\cite[Theorem~1.48]{bamler2020structure}}),
we can prove that
$p \in \Sigma$ if and only if $\limsup_{t \nearrow T} \tilde{r}_{\mathrm{Rm}}(p,t) = 0$
in the same way used in the proof of {\cite[Corollary~1.11]{bamler2017heat}}.
Since $\tilde{r}_{\mathrm{Rm}} \ge r_{\mathrm{Rm}},$ we have that
$p \in \Sigma$ if and only if $\limsup_{t \nearrow T} r_{\mathrm{Rm}}(p,t) = 0.$
\end{proof}

\section{Proof of Main Theorem}
Here, we will show Main Theorem.
To do this, we shall show the following slight general claim.
\begin{theo}
\label{theo1}
Let $(M^{n}, g(t))_{t \in [0,T)}$ be a Ricci flow on a closed manifold $M$ of dimension $n < 8$
and $T < +\infty.$
Assume that 
\[
\sup_{M \times [0,T)} |R_{g(t)}| \le R_{0} < +\infty
\]
for some $R_{0} \in \mathbb{R}_{\ge 0}.$
And assume that the time $T$ is finite time singularity of $(M, g(t))_{t \in [0,T)}.$
Assume that a singular point $p \in \Sigma$ satisfies
\[
(\mathrm{Rm/Ric})~~~\liminf_{t \nearrow T} \frac{r^{-2}_{\mathrm{Ric}}(p,t)}{r^{-2}_{\mathrm{Rm}}(p,t)} \ge C > 0
\]
for some $C > 0.$
Then for any $\epsilon_{1} \in \left( 0,\frac{1}{\sqrt{n} (n-1)} \right), \epsilon_{2} \in (0,1)$ there is no blow-up sequence $(p_{i}, t_{i}) \in M \times [0,T)$
which tends to $p$ and satisfies the following :
there exists $i_{0} \in \mathbb{N}$ such that for all $i \ge i_{0}$
there exists a point 
\[
\begin{split}
(q,t) &\in B \left( p_{i}, t_{i}, \frac{\epsilon_{2}}{\sqrt{2 C^{-1}}} r_{\mathrm{Ric}}(p_{i}, t_{i}) \right) \\
&\times \left( \max \left\{ 0, t_{i} - \frac{\epsilon_{2} (\sqrt{2 C^{-1}})^{-1}}{1 + C_{1}} r^{2}_{\mathrm{Ric}}(p_{i}, t_{i}) \right\},
\min \left\{ T,  t_{i} + \frac{\epsilon_{2} (\sqrt{2 C^{-1}})^{-1}}{1 + C_{1}} r^{2}_{\mathrm{Ric}}(p_{i}, t_{i}) \right\} \right)
\end{split}
\]
such that
\[
|\mathrm{Ric}| (q, t_{i}) \ge \epsilon_{1} r_{\mathrm{Ric}}^{-2}(p_{i},t_{i}),
\]
where $C_{1} = 2 ^{4}\sqrt{2/3}\sqrt{n-1}.$
\end{theo}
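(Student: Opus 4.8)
The plan is to argue by contradiction and reduce to Theorem~\ref{buzano-matteo}. Here a \emph{blow-up sequence} means $(p_{i},t_{i})$ with $t_{i}\nearrow T$ and $r_{\mathrm{Ric}}(p_{i},t_{i})\to 0$. Assume, for some $\epsilon_{1}\in(0,\tfrac{1}{\sqrt{n}(n-1)})$ and $\epsilon_{2}\in(0,1)$, that there \emph{is} such a sequence with $p_{i}\to p$ enjoying the stated property; for each $i\ge i_{0}$ fix a corresponding point, thus a $q_{i}\in M$ with $d_{t_{i}}(p_{i},q_{i})<\tfrac{\epsilon_{2}}{\sqrt{2C^{-1}}}\,r_{\mathrm{Ric}}(p_{i},t_{i})$ and $|\mathrm{Ric}|(q_{i},t_{i})\ge\epsilon_{1}\,r_{\mathrm{Ric}}^{-2}(p_{i},t_{i})$. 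One may assume $C\le 1$ (since $r_{\mathrm{Ric}}\ge r_{\mathrm{Rm}}$ the ratio $r_{\mathrm{Ric}}^{-2}/r_{\mathrm{Rm}}^{-2}$ is always $\le 1$), so $\tfrac{\epsilon_{2}}{\sqrt{2C^{-1}}}<1$ and $d_{t_{i}}(p_{i},q_{i})<r_{\mathrm{Ric}}(p_{i},t_{i})$; Lemma~\ref{prop1} on the slice $t_{i}$ then gives $\bigl(1-\tfrac{\epsilon_{2}}{\sqrt{2C^{-1}}}\bigr)r_{\mathrm{Ric}}(p_{i},t_{i})\le r_{\mathrm{Ric}}(q_{i},t_{i})\le\bigl(1+\tfrac{\epsilon_{2}}{\sqrt{2C^{-1}}}\bigr)r_{\mathrm{Ric}}(p_{i},t_{i})$, so $(q_{i},t_{i})$ is again a blow-up sequence with $r_{\mathrm{Ric}}(q_{i},t_{i})\asymp r_{\mathrm{Ric}}(p_{i},t_{i})$. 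The aim is to show $(q_{i},t_{i})$ is $\delta$-well-behaved in the sense of Definition~\ref{defi3} for a fixed $\delta=\delta(n,C,\epsilon_{1},\epsilon_{2})\in(0,1)$; since $\sup_{M\times[0,T)}|R|\le R_{0}=n(n-1)\cdot\tfrac{R_{0}}{n(n-1)}$, this contradicts Theorem~\ref{buzano-matteo}.

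\textbf{Curvature-scale comparison.} This is the heart of the matter. Using the hypothesis $(\mathrm{Rm/Ric})$ --- which gives $r_{\mathrm{Rm}}\ge\sqrt{C}\,r_{\mathrm{Ric}}$ near the singularity, hence $\tfrac{\epsilon_{2}}{\sqrt{2C^{-1}}}\,r_{\mathrm{Ric}}=\tfrac{\epsilon_{2}}{\sqrt{2}}\sqrt{C}\,r_{\mathrm{Ric}}\le\tfrac{\epsilon_{2}}{\sqrt{2}}\,r_{\mathrm{Rm}}$ --- together with Lemmas~\ref{prop1} and~\ref{prop2} and Remark~\ref{rema-harnack}, I would show that for all large $i$ one has $r_{\mathrm{Rm}}\asymp r_{\mathrm{Ric}}\asymp r_{\mathrm{Ric}}(p_{i},t_{i})$ on a parabolic neighbourhood of $(q_{i},t_{i})$ of size proportional to $r_{\mathrm{Ric}}(p_{i},t_{i})$; concretely, that $(q_{i},t_{i})$, together with a backward parabolic cylinder of spatial radius $\rho_{1}\,r_{\mathrm{Ric}}(p_{i},t_{i})$ for some $\rho_{1}=\rho_{1}(n,C)>0$, lies inside $P\bigl(p_{i},t_{i},r_{\mathrm{Rm}}(p_{i},t_{i})\bigr)$, on which $|\mathrm{Rm}|<r_{\mathrm{Rm}}^{-2}(p_{i},t_{i})$ by the very definition of $r_{\mathrm{Rm}}$. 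The exact radius $\tfrac{\epsilon_{2}}{\sqrt{2C^{-1}}}r_{\mathrm{Ric}}(p_{i},t_{i})$ and the time-width $\tfrac{\epsilon_{2}(\sqrt{2C^{-1}})^{-1}}{1+C_{1}}r_{\mathrm{Ric}}^{2}(p_{i},t_{i})$ in the statement are dictated by the need to absorb the distance term and the $C_{1}|t-s|^{1/2}$ oscillation term in Lemma~\ref{prop1} while keeping all these cylinders nested.

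\textbf{Gradient estimate and conclusion.} Rescale the flow by $r_{\mathrm{Ric}}(p_{i},t_{i})^{-2}$, so that $|R|\le R_{0}\,r_{\mathrm{Ric}}^{2}(p_{i},t_{i})\to 0\le n$ and the rescaled times exceed $1$ for $i$ large. By the previous step the hypotheses of Lemma~\ref{prop7} hold at each point of a definite ball around $q_{i}$ at time $t_{i}$, with a fixed scale $r_{0}=r_{0}(n,C)>0$; hence $|\nabla\mathrm{Ric}|(\cdot,t_{i})\le A_{1}r_{0}^{-2}$ there. Since $\bigl|\nabla|\mathrm{Ric}|\bigr|\le|\nabla\mathrm{Ric}|$, integrating along minimizing $g(t_{i})$-geodesics from $q_{i}$ and using $|\mathrm{Ric}|(q_{i},t_{i})\ge\epsilon_{1}r_{\mathrm{Ric}}^{-2}(p_{i},t_{i})$ yields $\rho_{0}=\rho_{0}(n,C,\epsilon_{1})>0$ with $|\mathrm{Ric}|(q',t_{i})\ge\tfrac12\epsilon_{1}r_{\mathrm{Ric}}^{-2}(p_{i},t_{i})$ for all $q'\in B\bigl(q_{i},t_{i},\rho_{0}r_{\mathrm{Ric}}(p_{i},t_{i})\bigr)$ (scale-invariance of the $\delta$-well-behaved condition makes unrescaling harmless). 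Now fix $\delta\in(0,1)$ so small that $\sqrt{\delta}\,r_{\mathrm{Ric}}(q_{i},t_{i})\le\rho_{0}r_{\mathrm{Ric}}(p_{i},t_{i})$, that $\sqrt{\delta}<\tfrac12$, and that $\delta\,a_{0}K<\tfrac12\epsilon_{1}$, where $a_{0}=\sqrt{n}(n-1)$ and $K=K(n,C,\epsilon_{2})$ is the constant in $r_{\mathrm{Ric}}^{-2}(q',t_{i})\le K\,r_{\mathrm{Ric}}^{-2}(p_{i},t_{i})$ coming from Remark~\ref{rema-harnack} and the first paragraph; the constraint $\epsilon_{1}<\tfrac{1}{\sqrt{n}(n-1)}=a_{0}^{-1}$ is exactly what leaves room for such a choice with $\delta<1$. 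Then, for every $q'\in B\bigl(q_{i},t_{i},\sqrt{\delta}\,r_{\mathrm{Ric}}(q_{i},t_{i})\bigr)$, $\delta\,a_{0}\,r_{\mathrm{Ric}}^{-2}(q',t_{i})\le\delta\,a_{0}K\,r_{\mathrm{Ric}}^{-2}(p_{i},t_{i})<\tfrac12\epsilon_{1}r_{\mathrm{Ric}}^{-2}(p_{i},t_{i})\le|\mathrm{Ric}|(q',t_{i})$, so $(q_{i},t_{i})$ is $\delta$-well-behaved, contradicting Theorem~\ref{buzano-matteo}. The Main Theorem is the case $p$ locally Type~I, which, as observed just before $(\mathrm{Rm/Ric})$ was introduced, forces $(\mathrm{Rm/Ric})$.

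\textbf{Main obstacle.} The delicate step is the curvature-scale comparison. The hypothesis $(\mathrm{Rm/Ric})$ is given only at the fixed point $p$ and only as a $\liminf$ in $t$, while Lemma~\ref{prop7} needs the two-sided bound $r_{\mathrm{Rm}}\asymp r_{\mathrm{Ric}}$ on an entire parabolic cylinder about the \emph{moving}, extracted point $(q_{i},t_{i})$, of size comparable to $r_{\mathrm{Ric}}(p_{i},t_{i})$. Transporting the comparison from $p$ to $(p_{i},t_{i})$ and then to a fixed-scale neighbourhood of $(q_{i},t_{i})$, with every parabolic cylinder nested inside the next, is what pins down the precise radii and the bound $\epsilon_{1}<\tfrac{1}{\sqrt{n}(n-1)}$ in the statement; for the underlying distance comparisons between time slices one also uses Lemma~\ref{prop3}.
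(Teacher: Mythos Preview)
Your approach is correct and uses the same ingredients as the paper: a contradiction argument reducing to Theorem~\ref{buzano-matteo} via the Bamler--Zhang gradient/derivative estimates for $\mathrm{Ric}$ (Lemma~\ref{prop7}, Remark~\ref{rema1}), together with the Harnack inequalities for the curvature scales (Lemma~\ref{prop2}, Remark~\ref{rema-harnack}) and the transport of the hypothesis $(\mathrm{Rm/Ric})$ from $p$ to $(p_{i},t_{i})$ via Lemmas~\ref{prop1}, \ref{prop3}, and \ref{prop8}. The only organizational difference is a duality: the paper first invokes Theorem~\ref{buzano-matteo} to produce, for each large $i$, a point $x\in B(p_{i},t_{i},\delta\,r_{\mathrm{Ric}}(p_{i},t_{i}))$ (with $\delta=\min\{\epsilon_{1}/6,\,1/(2(1+C_{1}))\}$) at which the well-behaved inequality \emph{fails}, so that $|\mathrm{Ric}|(x,t_{i})\le 4\delta\,r_{\mathrm{Ric}}^{-2}(p_{i},t_{i})$, and then uses Lemma~\ref{prop7} to force $|\mathrm{Ric}|$ at $q$ to be small as well, contradicting the hypothesis on $q$. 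You instead start from the point $q_{i}$ where $|\mathrm{Ric}|$ is large, propagate this lower bound to a ball via the same gradient estimate, and conclude that $(q_{i},t_{i})$ itself is $\delta$-well-behaved, contradicting Theorem~\ref{buzano-matteo} directly. The two routes carry the same technical content; yours is arguably a touch cleaner in that it builds the well-behaved sequence explicitly rather than arguing through the auxiliary point $x$. You correctly isolate the transport of $(\mathrm{Rm/Ric})$ from the fixed $p$ to the moving $(p_{i},t_{i})$ as the delicate step; the paper handles this with exactly the same tools (Lemmas~\ref{prop1}, \ref{prop3}, \ref{prop8}) and at a comparable level of detail.
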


\begin{proof}[Proof of Main Theorem from Theorem \ref{theo1}]
As seen in Section 3, if there exists a point $p \in \Sigma_{I},$ then $p$
satisfies $(\mathrm{Rm/Ric}).$
Hence, Main Theorem follows from Theorem \ref{theo1}.
\end{proof}

\begin{proof}[Proof of Theorem \ref{theo1}]
Firstly, we will show the case of $\epsilon_{2} = 1/2.$
We will prove this by contradiction.
Suppose that the claim is false.
Then, there exists $\epsilon_{1} \in \left( 0, \frac{1}{\sqrt{n}(n-1)} \right),$
a blow-up sequence $(p_{i}, t_{i}) \in M \times [0,T)$ such that $p_{i} \rightarrow p$ and $t_{i} \rightarrow T$
and $i_{0} \in \mathbb{N}$ such that for all $i \ge i_{0}$ there exists 
\[
\begin{split}
(q,t) &\in B \left( p_{i}, t_{i}, \frac{1/2}{\sqrt{2 C^{-1}}} r_{\mathrm{Ric}}(p_{i}, t_{i}) \right) \\
&\times \left( \max \left\{ 0, t_{i} - \frac{1/2 (\sqrt{2 C^{-1}})^{-1}}{1 + C_{1}} r^{2}_{\mathrm{Ric}}(p_{i}, t_{i}) \right\},
\min \left\{ T,  t_{i} + \frac{1/2 (\sqrt{2 C^{-1}})^{-1}}{1 + C_{1}} r^{2}_{\mathrm{Ric}}(p_{i}, t_{i}) \right\} \right)
\end{split}
\]
we have 
\[
|\mathrm{Ric}|(q, t_{i}) \ge \epsilon_{1} r^{-2}_{\mathrm{Ric}}(p_{i}, t_{i}).
\]
Thus, from Theorem \ref{buzano-matteo} and Remark \ref{rema-well},
by taking $i \in \mathbb{N}$ sufficiently greater than $i_{0}$ if necessary,
we can take a point $(x, t_{i}) \in B(p_{i}, t_{i}, \delta r_{\mathrm{Ric}}(p_{i}, t_{i}))$
where $\delta := \min \left\{ \frac{\epsilon_{1}}{6}, \frac{1}{2(1+C_{1})} \right\}.$
such that the following holds :

\noindent
$(0)~\frac{r^{-2}_{\mathrm{Ric}}(p_{i},t_{i})}{r^{-2}_{\mathrm{Rm}}(p_{i},t_{i})} \ge \frac{1}{2}C.$

\noindent
This is deduced from $(\mathrm{Rm/Ric}),$ Lemma \ref{prop1}, the distance distortion estimate Lemma \ref{prop3} and Lemma \ref{prop8}.

\[
(1)~r_{\mathrm{Ric}}(p_{i},t_{i}) 
\le \frac{C}{32} (A_{0} + A_{1})^{-1} (1 + \bar{C})^{-1} \delta.
\]

\noindent
where $C_{1}, \bar{C}, A_{i}$ denote respectively the universal constant in Lemma \ref{prop1}, Lemma \ref{prop3} and Lemma \ref{prop7}.

\noindent
This is deduced from the distance distortion Lemma \ref{prop3}, Lemma \ref{prop8} and $(\mathrm{Rm/Ric}).$

Since 
\[
\begin{split}
(q, t) &\in B \left( p_{i}, t_{i}, \frac{1/2}{\sqrt{2 C^{-1}}} r_{\mathrm{Ric}}(p_{i}, t_{i}) \right) \\
&\times \left( \max \left\{ 0, t_{i} - \frac{1/2 (\sqrt{2 C^{-1}})^{-1}}{1 + C_{1}} r^{2}_{\mathrm{Ric}}(p_{i}, t_{i}) \right\},
\min \left\{ T,  t_{i} + \frac{1/2 (\sqrt{2 C^{-1}})^{-1}}{1 + C_{1}} r^{2}_{\mathrm{Ric}}(p_{i}, t_{i}) \right\} \right),
\end{split}
\]
by (0), we have
\[
\begin{split}
(q,t) &\in B \left( p_{i}, t_{i}, \frac{1/2}{\sqrt{2 C^{-1}}} r_{\mathrm{Rm}}(p_{i}, t_{i}) \right) \\
&\times \left( \max \left\{ 0, t_{i} - \frac{1/2 (\sqrt{2 C^{-1}})^{-1}}{1 + C_{1}} r^{2}_{\mathrm{Rm}}(p_{i}, t_{i}) \right\},
\min \left\{ T,  t_{i} + \frac{1/2 (\sqrt{2 C^{-1}})^{-1}}{1 + C_{1}} r^{2}_{\mathrm{Rm}}(p_{i}, t_{i}) \right\} \right).
\end{split}
\]
Hence, by lemma \ref{prop2} and Remark \ref{rema-harnack}, we have
\[
r^{-2}_{\mathrm{Rm}}(q,t) \le 16 r^{-2}_{\mathrm{Rm}}(p_{i}, t_{i}).
\]
Thus, by (0),
\[
r^{-2}_{\mathrm{Rm}}(q,t) \le \frac{32}{C} r^{-2}_{\mathrm{Ric}}(p_{i}, t_{i}).
\]
Hence, by Lemma \ref{prop7} (and Remark \ref{rema1}), this estimate and $(1)$, we have 
\[
|\mathrm{Ric}|(x, t_{i}) \ge |\mathrm{Ric}|(q, t) - \left( A_{0} + A_{1} \right) (1 + \bar{C}) r_{\mathrm{Ric}}(p_{i}, t_{i}) \cdot r^{-2}_{\mathrm{Ric}}(p_{i}, t_{i}).
\]
Here, from the choice of $(x, t_{i})$ and lemma \ref{prop2}, we have
\[
|\mathrm{Ric}|(x, t_{i}) \le 4 \delta r^{-2}_{\mathrm{Ric}}(p_{i}, t_{i}).
\]
Therefore, by the above inequality, the defition of $\delta$ and (1), we have
\[
\epsilon_{1} r_{\mathrm{Ric}}^{-2}(p_{i}, t_{i}) > 5 \delta r_{\mathrm{Ric}}^{-2}(p_{i}, t_{i}) \ge |\mathrm{Ric}|(q,t).
\]
On the other hand, since
\[
|\mathrm{Ric}|(q,t) \ge \epsilon_{1} r_{\mathrm{Ric}}^{-2}(p_{i}, t_{i}).
\]
This contradist the above inequality.
Therefore we obtain the assertion for the case of $\epsilon_{2} = 1/2.$
And, from this case, we also get the assertion for the case of $\epsilon_{2} \le 1/2.$

For the case of $\epsilon_{2} \in (1/2, 1),$ we can show it in the almost same way as above.
We will explain it as follows.
If we suppose that the claim is false.
Then, there exists $\epsilon_{1} \in \left( 0, \frac{1}{\sqrt{n}(n-1)} \right),$
a blow-up sequence $(p_{i}, t_{i}) \in M \times [0,T)$ such that $p_{i} \rightarrow p$ and $t_{i} \rightarrow T$
and $i_{0} \in \mathbb{N}$ such that for all $i \ge i_{0}$ there exists 
\[
\begin{split}
(q,t) &\in B \left( p_{i}, t_{i}, \frac{\epsilon_{2}}{\sqrt{2 C^{-1}}} r_{\mathrm{Ric}}(p_{i}, t_{i}) \right) \\
&\times \left( \max \left\{ 0, t_{i} - \frac{\epsilon_{2} (\sqrt{2 C^{-1}})^{-1}}{1 + C_{1}} r^{2}_{\mathrm{Ric}}(p_{i}, t_{i}) \right\},
\min \left\{ T,  t_{i} + \frac{\epsilon_{2} (\sqrt{2 C^{-1}})^{-1}}{1 + C_{1}} r^{2}_{\mathrm{Ric}}(p_{i}, t_{i}) \right\} \right),
\end{split}
\]
we have 
\[
|\mathrm{Ric}|(q, t_{i}) \ge \epsilon_{1} r^{-2}_{\mathrm{Ric}}(p_{i}, t_{i}).
\]
And we can take a point $(x, t_{i})$ ($i$ is sufficiently large) in the same way as above
which satisfies (0) and (1).
Thus, using Lemma \ref{prop2}, Remark \ref{rema-harnack} $m$ times and arguing as above, we obtain 
\[
r^{-2}_{\mathrm{Rm}}(x, t_{i}) \le C (C, m) r^{-2}_{\mathrm{Ric}}(p_{i}, t_{i})
\]
for some positive large positive constant $C = C(C, m)$ depend on $C$ and $m.$
If we use Lemma \ref{prop2} and Remark \ref{rema-harnack} $m$ times,
then the above estimate holds for all
\[
\begin{split}
(x, t_{i}) &\in B \left( p_{i}, t_{i}, \frac{\epsilon_{m}}{\sqrt{2 C^{-1}}} r_{\mathrm{Ric}}(p_{i}, t_{i}) \right) \\
&\times \left( \max \left\{ 0, t_{i} - \frac{\epsilon_{m} (\sqrt{2 C^{-1}})^{-1}}{1 + C_{1}} r^{2}_{\mathrm{Ric}}(p_{i}, t_{i}) \right\},
\min \left\{ T,  t_{i} + \frac{\epsilon_{m} (\sqrt{2 C^{-1}})^{-1}}{1 + C_{1}} r^{2}_{\mathrm{Ric}}(p_{i}, t_{i}) \right\} \right),
\end{split}
\]
where $\epsilon_{m} := 1 - \left( \frac{1}{2} \right)^{m}.$
So if we take $m$ sufficiently large so that $\epsilon_{m} > \epsilon_{2}$
and teke $i$ large enough so that 
\[
(1')~r_{\mathrm{Ric}}(p_{i},t_{i}) 
\le C(C, m)^{-1} (A_{0} + A_{1})^{-1} (1 + \bar{C})^{-1} \delta
\]
instead of (1).
Then we obtain a contradiction in the same way as above.
This completes the proof.
\end{proof}

\begin{rema}
In the Main Theorem, for any blow-up sequence $(p_{i}, t_{i}),$
by Lemma \ref{prop1} and Lemma \ref{sing3},
\[
\begin{split}
r_{\mathrm{Ric}}(p_{i}, t_{i}) &\le r_{\mathrm{Ric}}(p, t_{i}) + (1 + \bar{C}) r_{\mathrm{Ric}}(p, t_{i}) \\
&= (2 + \bar{C}) r_{\mathrm{Ric}}(p, t_{i}) \\
&\le (2 + \bar{C}) \sqrt{T - t_{i}},
\end{split}
\]
where $\bar{C}$ denotes the constant appeared in distance distortion estimates (Lemma \ref{prop3}).
Hence, if we take $C$ large enough (we can take this arbitrary large if necessary from the definition of $C$)
so that $\frac{\epsilon_{2} (2 + \bar{C})^{2} }{\sqrt{2 C}(1 + C_{1})} < 1/2$ and
$i$ large enough so that $\frac{T}{2} < t_{i},$ then
\[
\begin{split}
t_{i} + \frac{\epsilon_{2}}{\sqrt{2 C}(1 + C_{1})} r_{\mathrm{Ric}}^{2}(p_{i}, t_{i}) &\le t_{i} + \frac{\epsilon_{2} (2 + \bar{C})^{2} }{\sqrt{2 C}(1 + C_{1})} (T - t_{i}) \\
< t_{i} + \frac{1}{2} (T - t_{i}) < T
\end{split}
\]
and
\[
\begin{split}
t_{i} - \frac{\epsilon_{2}}{\sqrt{2 C}(1 + C_{1})} r_{\mathrm{Ric}}^{2}(p_{i}, t_{i}) &\ge  t_{i} - \frac{\epsilon_{2} (2 + \bar{C})^{2} }{\sqrt{2 C}(1 + C_{1})} (T - t_{i}) \\
> t_{i} - \frac{1}{2} (T - t_{i}) > 0.
\end{split}
\]
That is, in this setting, 
\[
\max \left\{ 0, t_{i} - \frac{\epsilon_{2} (\sqrt{2 C})^{-1}}{1 + C_{1}} r^{2}_{\mathrm{Ric}}(p_{i}, t_{i}) \right\} 
= t_{i} - \frac{\epsilon_{2} (\sqrt{2 C})^{-1}}{1 + C_{1}} r^{2}_{\mathrm{Ric}}(p_{i}, t_{i})
\]
and
\[
\min \left\{ T, t_{i} + \frac{\epsilon_{2} (\sqrt{2 C})^{-1}}{1 + C_{1}} r^{2}_{\mathrm{Ric}}(p_{i}, t_{i}) \right\}
= t_{i} + \frac{\epsilon_{2} (\sqrt{2 C})^{-1}}{1 + C_{1}} r^{2}_{\mathrm{Ric}}(p_{i}, t_{i}).
\]
\end{rema}

\subsection*{Acknowledgement}
~~I would like to thank my supervisor Kazuo Akutagawa for continuous support.
I also would like to thank Reto Buzano for pointing out my mistake in the first version of this paper.

\bigskip
\textit{E-mail adress}:~hamanaka1311558@gmail.com

\textsc{Department Of Mathematics, Chuo University, Tokyo 112-8551, Japan}


\begin{thebibliography}{99}
\bibitem{appleton2019eguchi}
A. Appleton,
Eguchi-Hanson singularities in U(2)-invariant Ricci flow,
arXiv preprint arXiv:1903.09936, 2019.


\bibitem{bamler2018convergence}
R. H. Bamler,
Convergence of Ricci flows with bounded scalar curvature,
Ann. of Math. \textbf{188} (2018), 753--831.


\bibitem{bamler2020structure}
R. H. Bamler,
Structure theory of non-collapsed limits of Ricci flows,
arXiv preprint arXiv:2009.03243, 2020.


\bibitem{bamler2021recent}
R. H. Bamler,
Recent developments in Ricci flows,
arXiv preprint arXiv:2102.12615, 2021.


\bibitem{bamler2017heat}
R. H. Bamler and Q. S. Zhang,
Heat kernel and curvature bounds in Ricci flows with bounded scalar curvature,
arXiv preprint arXiv:1501.01291, 2015.

\bibitem{bamler2019heat}
R. H. Bamler and Q. S. Zhang,
Heat kernel and curvature bounds in Ricci flows with bounded scalar curvature--Part II
Calc. Var. Partial Differential Equations \textbf{58} (2019), 1--14.


\bibitem{buzano2020local}
R. Buzano and G. Di-Matteo,
A Local Singularity Analysis for the Ricci Flow and its Applications to Ricci Flows with Bounded Scalar Curvature,
arXiv preprint arXiv:2006.16227, 2020.



\bibitem{chen2013conditions}
X. Chen and B. Wang,
On the conditions to extend Ricci flow (III),
Int. Math. Res. Not. \textbf{2013} (2013), 2349--2367.

\bibitem{chen2012space}
X. Chen and B. Wang,
Space of Ricci flows I,
Comm. Pure Appl. Math. \textbf{65} (2012), 1399--1457. 


\bibitem{chen2017space}
X. Chen and B. Wang,
Space of Ricci flows (II)--Part A: Moduli of singular Calabi--Yau spaces,
Forum of Mathematics, Sigma \textbf{5} (2017).


\bibitem{matteo2020mixed}
G. Di-Matteo,
Mixed integral norms for Ricci flow,
J. Geom. Anal. \textbf{185} (2020),
doi:10.1007/s12220-020-00456-5.


\bibitem{enders2011type}
J. Enders, R. M{\"u}ller and P. M. Topping,
On type I singularities in Ricci flow,
Comm. Ana. Geom. \textbf{19} (2011), 905--922.


\bibitem{hallgren2021riccivolume}
M. Hallgren,
Ricci Flow with Ricci Curvature and Volume Bounded Below,
arXiv preprint arXiv:2104.03386, 2021.


\bibitem{hamilton1982three}
R. S. Hamilton,
Three-manifolds with positive Ricci curvature,
J. Differential Geom. \textbf{17} (1982), 255--306.



\bibitem{hamilton2formation}
R. S. Hamilton,
The formation of singularities in the Ricci flow,
Surv. Differ. Geom. \textbf{2} (1995), 7--136.


\bibitem{he2014remarks}
F. He,
Remarks on the extension of the ricci flow,
J. Geom. Anal. \textbf{24} (2014), 81--91.




\bibitem{naber2010noncompact}
A. Naber,
Noncompact shrinking four solitons with nonnegative curvature,
J. Reine Angew. Math. \textbf{645} (2010), 125--153.


\bibitem{perelman2002entropy}
G. Perelman,
The entropy formula for the Ricci flow and its geometric applications,
arXiv preprint math/0211159, 2002.

\bibitem{petersen2006riemannian}
P. Petersen,
Riemannian geometry (Third Edition),
GTM \textbf{171} Springer, 2006.

\bibitem{vsevsum2005curvature}
N. {\v{S}}e{\v{s}}um,
Curvature tensor under the Ricci flow,
Amer. J. Math. \textbf{127} (2005), 1315--1324.


\bibitem{sesum2006convergence}
N. {\v{S}}e{\v{s}}um,
Convergence of the Ricci flow toward a soliton,
Comm. Anal. Geom. \textbf{14} (2006), 283--343.


\bibitem{simon2015extending}
M. Simon,
Extending four dimensional Ricci flows with bounded scalar curvature,
arXiv preprint arXiv:1504.02910, 2015.


\bibitem{simon2015some}
M. Simon,
Some integral curvature estimates for the Ricci flow in four dimensions,
arXiv preprint arXiv:1504.02623, 2015.


\bibitem{stolarski2019curvature}
M. Stolarski,
Curvature blow-up in doubly-warped product metrics evolving by Ricci flow,
arXiv preprint arXiv:1905.00087, 2019.


\bibitem{topping2006lectures}
P. Topping,
Lectures on the Ricci flow,
London Mathematical Society Lecture Note Series \textbf{325}, 2006.

\bibitem{wang2008conditions}
B. Wang,
On the conditions to extend Ricci flow,
Int. Math. Res. Not. \textbf{2008} (2008), 2349--2367. 

\bibitem{wang2012conditions}
B. Wang,
On the conditions to extend Ricci flow (II),
Int. Math. Res. Not. \textbf{2012} (2012), 3192--3223. 


\bibitem{zhang2010scalar}
Z. Zhang,
Scalar curvature behavior for finite-time singularity of K{\"a}hler-Ricci flow,
Michigan Math. J. \textbf{59} (2010), 419--433.


\bibitem{zhang2011bounds}
Q. S. Zhang,
Bounds on volume growth of geodesic balls under Ricci flow,
arXiv preprint arXiv:1107.4262, 2011.
\end{thebibliography}
\end{document}